\title{Skew Axial Algebras of Monster Type II}
\author{Michael Turner\textsuperscript{}\thanks{\textsuperscript{}
School of Mathematics, University of Birmingham, Edgbaston, Birmingham, B15 2TT, UK, Email:
mxt187@student.bham.ac.uk
}}
\date{\today}
\newtheoremstyle{sltheorem}
{}                
{}                
{\slshape}        
{}                
{\bfseries}       
{.}               
{ }               
{}                
\theoremstyle{sltheorem}
\newtheorem{thm}{Theorem}[section]
\newtheorem{lem}[thm]{Lemma}
\newtheorem{prop}[thm]{Proposition}
\newtheorem*{thm*}{Theorem}
\newtheorem*{cor*}{Corollary}
\theoremstyle{definition}
\newtheorem{defn}[thm]{Definition}
\newtheorem{exmp}[thm]{Example}
\newtheorem{no}[thm]{Notation}
\theoremstyle{remark}
\newtheorem*{rem}{Remark}
\newtheorem*{note}{Note}
\newcommand{\id}{\mathds{1}}
\newcommand{\F}{\mathbb{F}}
\newcommand{\mcal}[1]{\mathcal{#1}}
\newcommand{\Mod}[1]{\ (\mathrm{mod}\ #1)}
\newcommand{\GenG}[1]{\langle #1\rangle}
\newcommand{\GenA}[1]{\langle\!\langle #1\rangle\!\rangle}
\newcommand{\fustar}[1]{(\mcal{#1},\star)}
\newcommand{\Z}{\mathbb{Z}}
\newcommand{\N}{\mathbb{N}}
\newcommand{\spec}[1]{\text{Spec}(#1)}
\newcommand{\al}{\alpha}
\newcommand{\bt}{\beta}
\newcommand{\lm}{\lambda}
\newcommand{\ep}{\epsilon}
\newcommand{\tu}[1]{\tau_{#1}}
\newcommand{\gm}{\gamma}
\newcommand{\jor}[1]{\mcal{J}(#1)}
\newcommand{\mon}[1]{\mcal{M}(#1)}
\newcommand{\Spec}[1]{\text{Spec}(#1)}
\newcommand{\hw}{\mcal{H}}
\newcommand{\hwc}{\hat{\mcal{H}}}
\newcommand{\TB}{2\mathrm{B}}
\newcommand{\TC}{3\mathrm{C}}
\newcommand{\TA}{3\mathrm{A}}
\newcommand{\FA}{4\mathrm{A}}
\newcommand{\FB}{4\mathrm{B}}
\newcommand{\FJ}{4\mathrm{J}}
\newcommand{\FY}{4\mathrm{Y}}
\newcommand{\CA}{5\mathrm{A}}
\newcommand{\SA}{6\mathrm{A}}
\newcommand{\SJ}{6\mathrm{J}}
\newcommand{\SY}{6\mathrm{Y}}
\newcommand{\IY}[1]{\mathrm{IY}_{#1}}
\begin{document}

\maketitle
\begin{abstract}
In our first paper, we looked at $2$-generated primitive axial algebras of Monster type with skew axet $X'(1+2)$. We continue our work by focusing on larger skew axets and classifying all such algebras with skew axets. This brings us one step closer to a complete classification of all $2$-generated primitive axial algebras of Monster type. 
\end{abstract}
\section{Introduction}

Axial algebras are a class of non-associative commutative algebras which are strongly related to groups; in particular, the sporadic groups and the $3$-transposition groups. Hall, Rehren and Shpectorov defined axial algebras in \cite{hall2015universal} and \cite{hall2015primitive} as they drew inspiration from Majorana algebras in \cite{ivanov2009monster} along with previous work around the Griess algebra. The $2$-generated Majorana algebras were classified in \cite{ivanov2010majorana}, using results of Norton and Sakuma. In \cite{rehren2015axial}, Rehren investigated axial algebras of Monster type $(\al,\bt)$ and found generalisation of the Norton-Sakuma algebras. Since then, other algebras have been found in \cite{joshi2020axial}, \cite{franchi2022infinite}, and \cite{yabe2023classification}  to name a few. 

In \cite{mcinroy2023forbidden}, M\textsuperscript{c}Inroy and Shpectorov generalised the term of shape used by Ivanov for Majorana algebras to identify each $2$-generated axial subalgebra with axes in the axet. If one has an axial algebra generated by $n>2$ axes, this is a key tool to understand the structure of the algebra. In their work, they noticed for $2$-generated $C_2$-axets, only two options occur; they are either regular or skew. There are many examples of axial algebras with regular axets however at the time of their first preprint, there were no know examples of a skew axial algebra.

In \cite{turner2023skew}, we produced examples of skew axial algebras of Monster type, namely $\TC(\al,1-\al)$, $Q_2(\frac{1}{3},\frac{2}{3})$, and $Q_2(\frac{1}{3})^\times \oplus \GenG{\id}$, moreover proved that these are the only algebras with axet $X'(1+2)$. Our work focused on $k=1$ however for $k\in \N$ and $k\geq 2$, it was still an open problem wherever any primitive $2$-generated $\mon{\al,\bt}$-axial algebras with axet $X'(k+2k)$ existed. In this paper, we will be concerned with those algebras and complete the  classification of primitive $2$-generated axial algebras of Monster type with skew axet. This is one step further to classifying all $2$-generated primitive axial algebras of Monster type.

We proceed as follows: Section 2 will be reminding the reader of definitions of axial algebras, axets and the construction of a $2$-generated axial algebra. This section will be kept to the bare minimum and we recommend the reader to look at the previous paper for a more detailed introduction. It is important to note that we change our approach to the initial paper. In \cite{turner2023skew}, we found an upper bound of the dimension and we used GAP to produce relations between the indeterminants before looking at the structure of the possible algebras. However in this paper, there are no such calculations as we focus on subalgebras and the structure from the beginning. We will split the task into two separate cases. When $k$ is odd, we will use the idea mentioned in our first paper. Since $X'(k+2k)$ has a subaxet of $X'(1+2)$, we can apply results from the smaller case. Moreover, there is a symmetric subalgebra in the possible algebra. This leads to investigating a few possible algebras before getting the following result.
\begin{thm}
Let $\F$ have characteristic not equal to $2$ with $k\geq 3$ and odd. There are no primitive $2$-generated $\mon{\al,\bt}$-axial algebras over $\F$ with axet $X'(k+2k)$.
\end{thm}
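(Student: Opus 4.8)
The plan is to argue by contradiction, exploiting the two substructures that any algebra with axet $X'(k+2k)$ must contain. Suppose $A$ is a primitive $2$-generated $\mon{\al,\bt}$-axial algebra over $\F$ with axet $X'(k+2k)$, where $k\geq 3$ is odd. Because $k$ is odd, the skew axet $X'(k+2k)$ contains a subaxet isomorphic to $X'(1+2)$: one selects the three axes lying in the appropriate orbit positions, and the oddness of $k$ guarantees that the induced Miyamoto action folds them into the $X'(1+2)$ pattern rather than a regular one. The subalgebra $B$ they generate is then a primitive $2$-generated algebra of Monster type with axet $X'(1+2)$, so by the main classification of \cite{turner2023skew} it is one of $\TC(\al,1-\al)$, $Q_2(\tfrac13,\tfrac23)$, or $Q_2(\tfrac13)^\times\oplus\GenG{\id}$. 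Each of these pins $(\al,\bt)$ down to a short explicit list of admissible pairs — for instance the $\TC$ family forces $\bt=1-\al$, while the $Q_2$ algebras force $\al=\tfrac13$.

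Next I would use the symmetric subalgebra promised by the axet structure. Inside $X'(k+2k)$ there is a \emph{regular} subaxet, and the subalgebra $S$ on it is a primitive $2$-generated axial algebra of Monster type with a regular $C_2$-axet. Such algebras are completely classified, so matching the admissible $S$ against the values of $(\al,\bt)$ surviving from the previous step cuts the list of candidate parameters and candidate structures for $A$ down to only finitely many, and for each I would record the explicit products of the generating axes that $B$ and $S$ already force.

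The decisive step is then to reconstruct the remaining products across the \emph{whole} skew axet using the Miyamoto action, and to test them against primitivity and the $\mon{\al,\bt}$ fusion law. The essential feature of a skew axet is the asymmetry between how $\tu{a_0}$ and $\tu{a_1}$ act, which produces relations genuinely different from the regular case; computing the eigenspace decomposition of $\ad{a_0}$ on the span of the axes and demanding that every axis remain primitive with Monster-type eigenvalues yields an overdetermined system. I expect this system to be inconsistent for every odd $k\geq 3$: some eigenvector relation should collapse, forcing either $\al=\bt$, a degenerate value of $\al$ or $\bt$, or the full algebra $A$ to coincide with its subalgebra $B$ (so that the axet is not genuinely $X'(k+2k)$), each of which contradicts the hypotheses.

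The main obstacle is precisely this last step. In the $k=1$ case of \cite{turner2023skew} the contradiction was extracted by machine computation, but here the number of axes grows with $k$, so the argument must be uniform in $k$ rather than a finite case check. The key will be to isolate a single family of contradictory relations — ideally one product or one eigenvector identity — that survives for all odd $k\geq 3$, so that the non-existence follows without having to treat each value of $k$ separately.
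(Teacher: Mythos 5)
Your overall architecture is the same as the paper's: the $X'(1+2)$ subaxet and the classification from \cite{turner2023skew} force $\al+\bt=1$, and the subalgebra $L=\GenA{a_{-1},a_1}$ on the odd axes is symmetric --- witnessed by $\tu{0}$, which swaps the two generators --- so the classification of \emph{symmetric} $2$-generated $\mon{\al,\bt}$-algebras (Yabe, Franchi--Mainardis) applies to it. One imprecision worth flagging: you write that primitive $2$-generated Monster type algebras ``with a regular $C_2$-axet'' are completely classified. They are not --- the non-symmetric case is exactly what remains open, and is the subject of this very paper --- so the symmetry of $L$ is not a convenience but the hinge on which the whole argument turns; it must be exhibited explicitly, as above.

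The genuine gap is your final step, which you leave as an expectation (``I expect this system to be inconsistent'') while searching for contradictory relations uniform in $k$. The uniformity you want is already present and you miss it: $L$ has axet $X(2k)$, which \emph{grows with} $k$, whereas once $\al+\bt=1$ (so $\al,\bt\neq\frac{1}{2}$) the symmetric classification excludes $\hw$, $\hwc$, $\IY{3}(\al,\frac{1}{2},\mu)$ and $\IY{5}(\al,\frac{1}{2})$ --- the only families with unbounded axets --- and every surviving algebra has axet of size at most $X(6)$. Hence $k=3$ immediately, and $L$ must be $\SA(\al,-\frac{\al^2}{4(2\al-1)})$, $\SJ(2\bt,\bt)$, or a quotient; no analysis of a growing ``overdetermined system'' is ever needed, and without the $X(2k)$ observation your plan has no mechanism to terminate. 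The paper then kills the finitely many survivors by concrete means you would still need to supply: $\SJ(\frac{2}{3},\frac{1}{3})$ and its quotient die because any $\mon{2\bt,\bt}$-algebra of dimension greater than $4$ is symmetric by \cite[Theorem 1.1]{franchi20212}, while a skew algebra cannot be symmetric (the flip would have to biject $k$ even axes with $2k$ odd ones); the nontrivial quotients of $\SA(\al,-\frac{\al^2}{4(2\al-1)})$ fail $\al+\bt=1$ by direct arithmetic; and $\SA(\al,-\frac{\al^2}{4(2\al-1)})$ itself is eliminated by identifying $a_0=\id-c$ inside $U_3\cong\TC(\al)$ and expanding $a_0(b_1-b_0)$ in the multiplication table, which forces $\al=4\bt$, hence $\al=\frac{1}{3}$, and then the $b_{-2}$-coefficient yields $\frac{1}{18}=0$, a contradiction. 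So your proposal is a correct opening that reproduces the paper's two subalgebras, but the decisive closing argument is absent rather than merely sketched.
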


When $k$ is even, we will proceed with new ideas.  First, we will show that $X'(k+2k)$ has a subaxet $X'(2^n+2^{n+1})$ for some $n \in \N$ and so restricts our problem. Investigating possible algebras with axet $X'(2^n+2^{n+1})$, we will show that no algebra exists before extending it back to $k$. This produces the following theorem.
\begin{thm}
Let $\F$ have characteristic not equal to $2$ with $k\geq 2$ and even. There are no primitive $2$-generated $\mon{\al,\bt}$-axial algebra over $\F$ with axet $X'(k+2k)$.
\end{thm}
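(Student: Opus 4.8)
The plan is to carry out the reduction announced above in three stages: pass from a general even $k$ to a power of two, rule out the power-of-two axet by a direct structural analysis, and finally lift the conclusion back to $X'(k+2k)$.

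Suppose for contradiction that $A$ is a primitive $2$-generated $\mon{\al,\bt}$-axial algebra over $\F$ (recall $\mathrm{char}\,\F \neq 2$) with axet $X'(k+2k)$, $k$ even, and write $k = 2^n q$ with $q$ odd and $n \geq 1$. The first step is to exhibit a $C_2$-subaxet of shape $X'(2^n + 2^{n+1})$. Writing the rotation part of the dihedral Miyamoto group $\Miy{A}$ as $\GenG{\rho}$ with $\rho = \tu{0}\tu{1}$, I would pass to a sub-dihedral group obtained by dividing out the odd part $q$, whose orbit on the axes consists of the required $3\cdot 2^n$ of them. The essential point --- and the reason to divide out the \emph{odd} part rather than, say, to halve $k$ --- is that an odd-index restriction preserves the skew structure, whereas an even-index one need not: the subalgebra generated by these axes is again a primitive $2$-generated $\mon{\al,\bt}$-axial algebra, and its axet is the \emph{skew} axet $X'(2^n + 2^{n+1})$. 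Since a subalgebra of $A$ inherits being of Monster type and primitive, it therefore suffices to rule out algebras with axet $X'(2^n + 2^{n+1})$ for every $n \geq 1$.

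The heart of the matter is thus the power-of-two case, and this is where genuinely new ideas are needed, since the symmetric-subalgebra device that handles odd $k$ is unavailable. Here I would fix a chain of axes $a_0, a_1, \dots$ in $X'(2^n + 2^{n+1})$ and play two constraints against each other: the fusion law $\mon{\al,\bt}$, governing how the $\al$- and $\bt$-eigenvectors of a fixed $\ad{a_0}$ multiply, and the Miyamoto involutions $\tu{i}$, which conjugate one axis to another and so force compatible eigenspace decompositions all along the chain. Combined with primitivity (each $1$-eigenspace is spanned by its axis), expanding the products $a_0 a_i$ in the eigenbasis of $\ad{a_0}$ yields a rigid system of relations among $\al$, $\bt$, and the structure constants. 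I would then show this system has no solution compatible with a genuinely skew axet of this size --- the expected mechanisms being that it forces $\al = \bt$, or a collapse of the eigenvalue spectrum, or a violation of the fusion rule $\bt \star \bt \subseteq \{1,0,\al\}$ --- giving the contradiction.

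I expect this power-of-two step to be the principal obstacle, for two structural reasons. First, within the power-of-two tower every proper subaxet has even index and so does not retain skewness; one therefore cannot induct downwards on $n$ through subaxets, and each level (or a single argument uniform in $n \geq 1$) must be confronted directly. Second, the contradiction has to be extracted from the fusion and Miyamoto relations alone, without the extra symmetry that shortened the odd case. Once nonexistence is established for all $X'(2^n + 2^{n+1})$ with $n \geq 1$, the lift-back is immediate: any even $k$ has $2$-adic valuation $n \geq 1$, the odd-index subaxet constructed above is skew of the excluded shape, and hence no primitive $2$-generated $\mon{\al,\bt}$-axial algebra with axet $X'(k+2k)$ can exist.
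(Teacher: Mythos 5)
Your outer architecture matches the paper exactly: the reduction from even $k$ to the power-of-two case is the paper's Proposition \ref{even reduction} (the closure of $\{a_0,a_m\}$ with $m$ the odd part of $k$ is a skew subaxet $X'(q+2q)$, $q=2^n$), and your lift-back at the end is precisely the paper's two-line proof of the theorem, since a subalgebra of a $\mon{\al,\bt}$-algebra generated by two of its axes is again primitive $2$-generated of Monster type. So stages one and three are sound.

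The genuine gap is stage two, which is the entire content of the theorem: for the axet $X'(q+2q)$ you describe only a strategy (``expand $a_0a_i$ in the eigenbasis of $\ad{a_0}$, obtain a rigid system, show it has no solution'') without deriving a single relation or exhibiting the contradiction. Worse, the failure modes you predict --- forcing $\al=\bt$, spectrum collapse, or a fusion-law violation --- are not what happens: the even case imposes \emph{no} constraint on $(\al,\bt)$ beyond the standing hypotheses (unlike the odd case, where $\al+\bt=1$ is forced), so a hunt for parameter constraints would not terminate. The paper's contradiction is instead a forced identification of two distinct \emph{odd} axes, and the mechanism hinges on a number-theoretic fact special to the $2$-power case that your sketch never touches: since $4q$ is a power of $2$, every odd $t$ generates $\Z/4q\Z$, whence $s_{0,t}=s_{0,2q-t}$ for $s_{i,r}=a_ia_{i+r}-\bt(a_i+a_{i+r})$ (Lemma \ref{divider}, via Lemma \ref{mod s}). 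Applying the projection map gives $\lm_t=\lm_{2q-t}$, and comparing the $0$- and $\al$-eigenvector components of $a_t$ and $a_{2q-t}$ from Lemma \ref{eigen} shows $a_t+a_{-t}-a_{2q-t}-a_{-(2q-t)}$ lies in two distinct eigenspaces of $\ad{a_0}$ (here $\al\neq\bt$ is used), hence vanishes: $a_t+a_{-t}=a_{2q-t}+a_{-(2q-t)}$ (Lemma \ref{even lem}). Taking $t=q-1$, applying $\tu{\frac{q}{2}}$, and subtracting the $t=1$ instance yields $2a_{-1}=2a_{2q-1}$, so $a_{-1}=a_{2q-1}$ in characteristic $\neq 2$ --- impossible, since the skew identification $a_{2i}=a_{2(i+q)}$ only merges even axes (Proposition \ref{power of 2}). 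Without this derivation, or some substitute for it, your proposal establishes only the reduction, not the nonexistence; and as written it points the search in a direction (constraints on $\al,\bt$) where no contradiction exists to be found.
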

We therefore have the complete classification of the primitive $2$-generated skew axial algebras of Monster type. 
\begin{thm}
Let $\F$ have characteristic not equal to $2$. Suppose $(A,X)$ is a $2$-generated primitive $\mon{\al,\bt}$-axial algebra over $\F$ with axet $X'(k+2k)$. Then $k=1$, $\al+\bt=1$ and either
\begin{enumerate}
\item[$1.$] $A \cong \TC(\al,1-\al)$ for $\al\neq \frac{1}{2}$, or
\item[$2.$] $(\al,\bt)=(\frac{1}{3},\frac{2}{3})$ and either
\begin{enumerate}
    \item[$i)$] $A\cong Q_2(\frac{1}{3},\frac{2}{3}$) if $\F$ has characteristic not equal to $5$, or
    \item[$ii)$] $A\cong Q_2(\frac{1}{3})^\times\oplus \GenG{\id}$ if $\F$ has characteristic equal to $5$.
\end{enumerate}
\end{enumerate}
\proof Follows from Theorem 1.1 in \cite{turner2023skew}, Theorem 1.1, and Theorem 1.2. \qed 
\end{thm}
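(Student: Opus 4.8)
The plan is to reduce the whole statement to a case analysis on the positive integer $k$ parametrising the axet $X'(k+2k)$, since the three results cited in the one-line proof partition exactly this parameter space. First I would record that $k\in\N$ with $k\geq 1$, and split into the three mutually exclusive and exhaustive cases $k=1$, $k\geq 2$ even, and $k\geq 3$ odd. The point is that each case is already settled: the $k=1$ case is the content of the first paper, while the two ranges $k\geq 2$ are precisely the nonexistence results established as Theorem 1.2 (even $k$) and Theorem 1.1 (odd $k\geq 3$) of the present paper.

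For the base case $k=1$, I would simply invoke Theorem 1.1 of \cite{turner2023skew}, which classifies all primitive $2$-generated $\mon{\al,\bt}$-axial algebras with axet $X'(1+2)$. That theorem forces $\al+\bt=1$ and produces exactly the algebras listed: $A\cong\TC(\al,1-\al)$ for $\al\neq\frac{1}{2}$, and, in the distinguished parameter case $(\al,\bt)=(\frac{1}{3},\frac{2}{3})$, either $A\cong Q_2(\frac{1}{3},\frac{2}{3})$ when $\mathrm{char}\,\F\neq 5$ or $A\cong Q_2(\frac{1}{3})^\times\oplus\GenG{\id}$ when $\mathrm{char}\,\F=5$. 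This reproduces items $1.$ and $2.$ of the statement verbatim, including the condition $\al+\bt=1$.

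For the remaining cases I would rule out existence entirely. Theorem 1.1 of this paper shows that no primitive $2$-generated $\mon{\al,\bt}$-axial algebra over $\F$ has axet $X'(k+2k)$ when $k\geq 3$ is odd, and Theorem 1.2 gives the same conclusion when $k\geq 2$ is even. Combining these, no such algebra exists for any $k\geq 2$; hence the only surviving value is $k=1$, and the base-case classification exhausts the possibilities. The conceptual substance of the argument lives entirely in these two nonexistence theorems rather than in the assembly step: for odd $k$ the key leverage is that $X'(k+2k)$ contains $X'(1+2)$ as a subaxet (so the first-paper classification constrains a subalgebra) together with a symmetric subalgebra, while for even $k$ the crucial reduction is to a $2$-power subaxet $X'(2^n+2^{n+1})$. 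Thus the hard part is genuinely the prior theorems; once they are in hand, the classification follows by a clean case split with no further computation.
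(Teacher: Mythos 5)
Your proposal is correct and matches the paper's own proof exactly: the paper also disposes of the theorem by splitting on $k$ and citing Theorem 1.1 of \cite{turner2023skew} for $k=1$, Theorem 1.1 for odd $k\geq 3$, and Theorem 1.2 for even $k\geq 2$. Your expanded write-up simply makes explicit the case analysis that the paper's one-line proof leaves implicit.
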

All the algebras above have been constructed in Section 3 in \cite{turner2023skew}. This classification gives a complete answer to Question 1.2 in \cite{mcinroy2023forbidden} and Problem 6.14 in \cite{mcinroy2022axial}.  

The author would like to thank the anonymous referee for their useful suggestions and comments, particularly in Section $5$ which simplifies the arguments greatly. 
\section{Background}
Since we defined these concepts in the previous paper, we will keep this section short. We recommend the reader to look at \cite{turner2023skew} and/or any citations to get more detail. 
\subsection{Axial Algebras}
Axial algebras are non-associative, commutative algebras with certain properties depending on their eigenspaces. Good introductions to these algebras would be \cite{hall2015universal}, \cite{khasraw2020structure}, and \cite{mcinroy2022axial}.

Let $\F$ be a field and $\mcal{F}$ be a subset of $\F$. A \emph{fusion law} on $\mcal{F}$ is a map $\star: \mcal{F} \times \mcal{F} \rightarrow 2^{\mcal{F}}$ with $2^\mcal{F}$ denoting the power set of $\mcal{F}$. Let  $A$ be a commutative algebra over $\F$. We define the adjoint map for $a\in A$ to be
$\text{ad}_a: A \rightarrow A$, such that $\text{ad}_a(x)=ax$ for all $x\in A$. The set of eigenvalues of $\text{ad}_a$ is denoted by $\Spec{a}$ and for all $\lm \in \F$, $A_\lm(a)$ is the $\lm$-eigenspace of $\text{ad}_a$. Further, for $S\subseteq \F$, we let $A_S(a):=\oplus_{\lm\in S}A_\lm(a)$. 

\begin{defn}
Let $\fustar{F}$ be a fusion law, $A$ be a commutative algebra over $\F$, and $a\in A$. We say that $a$ is a (primitive) $\mcal{F}$-\emph{axis} if it satisfies the following:
\begin{enumerate}
    \item[A$1.$] $a$ is an idempotent; $a^2=a$,
    \item[A$2.$] $a$ is semisimple and $\spec{a}\subseteq \mcal{F}$; that is, $A=A_\mcal{F}(a)$,
    \item[A$3$.] For all $\lm,\mu \in \mcal{F}$, $A_\lm(a)A_\mu(a) \subseteq A_{\lm \star \mu}(a)$, and 
    \item[A$4$.] $A_1(a)=\GenG{a}$.
\end{enumerate}
Further, let $X$ be a set of (primitive) $\mcal{F}$-axes. We call $(A,X)$ a (primitive) $\mcal{F}$-\emph{axial algebra} if $X$ generates $A$.
\end{defn}
\begin{note}
One can define axial algebras by excluding A$4$ and they would be non-primitive. As we will only be concerned with primitive axial algebras, we will assume primitivity throughout. 
\end{note}
We call an axial algebra $A$ of \emph{Monster type} $(\al,\bt)$ if it has the fusion law of $\mon{\al,\bt}$ given in Table \ref{Main Axial} with $\al,\bt \notin \{0,1\}$ and $\al\neq \bt$. 
\begin{table}[!h]
\centering
        \begin{tabular}{|c||c|c|c|c|} 
 \hline
  $\star$& 1 & 0 & $\alpha$ & $\beta$\\ [0.5ex] 
 \hline
 \hline
 1 & 1 &  & $\alpha$ & $\beta$\\ 
 \hline
 0 &  & 0 & $\alpha$ & $\beta$\\
 \hline
 $\alpha$ & $\alpha$ & $\alpha$& 1, 0 & $\beta$\\
 \hline
 $\beta$ & $\beta$ & $\beta$ & $\beta$ & 1, 0, $\alpha$\\
 \hline
 \end{tabular}
    \caption{The fusion law of $\mcal{M}(\alpha, \beta)$.}\label{Main Axial}
\end{table}

For the rest of the paper, we assume the characteristic of $\F$ is not equal to two and $(A,X)$ is a $2$-generated $\mon{\al,\bt}$-axial algebra. For any $\mon{\al,\bt}$-axis $a$, we define $\tu{a}$ to be the automorphism of $A$ which acts as the identity on $A_{\{1,0,\al\}}(a)$ and negative identity on $A_\bt(a)$. We call $\tu{a}$ the \emph{Miyamoto involution} of $a$. Due to A$2$ and A$4$, any $x\in A$ can be written as $x=\mu a + x_0 +x_\al+x_\bt$ with $x_\nu\in A_\nu(a)$ for $\nu\in\{0,\al,\bt\}$. We define the \emph{projection map} to be
\begin{eqnarray*}
        \lm_a : A \rightarrow \F\\
        x \mapsto \mu.
    \end{eqnarray*}
The projection map is a well-defined $\F$-linear map by Lemma 3.4 in \cite{franchi20211}.

\subsection{Axets}
Axets are a recent concept which can be traced back to work in \cite{mcinroy2020expansion}. They have been used in multiple papers; for example \cite{mcinroy2023forbidden} and \cite{mcinroy2022axial} while relating axial algebras with more group theoretic ideas. These axets help build a shape of an axial algebra $A$, which leads to deeper understanding of the structure of $A$.
\begin{defn}
Let $S$ be a group. Suppose $G$ is a group which acts on a set $X$ and there is a map $\tau:X\times S \rightarrow G$, denoted $\tau(x,s)=\tu{x}(s)$. Then $(G,X,\tau)$ is called an $S$-axet if for all $x\in X$, $s,s'\in S$ and $g\in G$, the following properties hold:
\begin{enumerate}
    \item $\tu{x}(s)\in G_x$,
    \item $\tu{x}(ss')=\tu{x}(s)\tu{x}(s')$, and 
    \item $\tu{xg}(s)=\tu{x}(s)^g$.
\end{enumerate}
If it is clear, we denote the axet by $X$.
For each $x\in X$, let $T_x:=\text{Im}(\tu{x})$ which is called the \emph{axial subgroup corresponding to }$x$.
\end{defn}
\begin{note}
We can assume $S$ to be abelian as $T_x\leq Z(G_x)$ and $[S,S]\leq \text{ker}(\tu{x})$ for all $x\in X$. 
\end{note}
For an axet $X$, we call $Y\subseteq X$ $\emph{closed}$ if it is invariant under $T_y$ for all $y\in Y$. For $Z\subseteq X$, we  denote $\GenG{Z}$ to be the smallest closed subset containing $Z$ and we call $\GenG{Z}$ the \emph{closure} of $Z$. Hence $Z$ is closed if and only if $Z=\GenG{Z}$. This paper will be concerned with closed subaxets which are generated by two elements. We denote $C_2=\{e,s\}$ to be the group of order $2$ with identity element $e$.
\begin{exmp}
We will mention some examples of $C_2$-axets that will be used in this paper.
\begin{enumerate}
    \item For $n\geq 2$, denote $X(n)$ to be the vertices of a regular $n$-gon. Let $X=X(n)$ and label the vertices by $a_i$ with $1\leq i\leq n$ in anticlockwise manner. Let $G=D_{2n}$ and for $a\in X$, $\tu{a_i}(e)$ is the identity map and $\tu{a_i}(s)$ is the reflection of $X$ at vertex $a_i$. This is an example of a \emph{regular} axet.
    \item Let $n=4k$ and we look at the $C_2$-axet $X(4k)$. Now identify the opposite vertices of one bipartite half of the polygon denoting this by $X':=X'(k+2k)$. To reiterate, the skew identity can be stated as: for all $i\in \Z$, $a_{2i}=a_{2(i+k)}$. Let $G=D_{4k}$ and $\tau'$ to be the restriction of the domain to $X'\times C_2$. Then $(G,X',\tau')$ is a $C_2$-axet and we call $X'$ \emph{skew}. 
\end{enumerate}
In both examples, the axets are $2$-generated by $\{a_0,a_1\}$. Further, we can explicitly say how $\tu{i}:=\tu{a_i}(s)$ acts: for $j\in \Z$, $a_j^{\tu{i}}=a_{2i-j}$.
\end{exmp}
In Figures \ref{Skew-2} and \ref{Skew-3}, we show $X'(2+4)$ and $X'(3+6)$, where the dotted arrow represents equivalence between two vertices. 

\begin{figure}[!t]
\centering
\begin{minipage}{.5\textwidth}
\centering
\tikzset{every picture/.style={line width=0.75pt}} 

\begin{tikzpicture}[x=0.75pt,y=0.75pt,yscale=-1,xscale=1]

\draw   (408,138.34) -- (383.59,197.27) -- (324.66,221.68) -- (265.73,197.27) -- (241.32,138.34) -- (265.73,79.41) -- (324.66,55) -- (383.59,79.41) -- cycle ;
\draw [color={rgb, 255:red, 232; green, 1; blue, 29 }  ,draw opacity=1 ] [dash pattern={on 0.84pt off 2.51pt}]  (324.66,58) -- (324.66,218.68) ;
\draw [shift={(324.66,221.68)}, rotate = 270] [fill={rgb, 255:red, 232; green, 1; blue, 29 }  ,fill opacity=1 ][line width=0.08]  [draw opacity=0] (8.93,-4.29) -- (0,0) -- (8.93,4.29) -- cycle    ;
\draw [shift={(324.66,55)}, rotate = 90] [fill={rgb, 255:red, 232; green, 1; blue, 29 }  ,fill opacity=1 ][line width=0.08]  [draw opacity=0] (8.93,-4.29) -- (0,0) -- (8.93,4.29) -- cycle    ;
\draw [color={rgb, 255:red, 232; green, 1; blue, 29 }  ,draw opacity=1 ][dash pattern={on 0.84pt off 2.51pt}]  (244.32,138.34) -- (405,138.34) ;
\draw [shift={(408,138.34)}, rotate = 180] [fill={rgb, 255:red, 232; green, 1; blue, 29 }  ,fill opacity=1 ][line width=0.08]  [draw opacity=0] (8.93,-4.29) -- (0,0) -- (8.93,4.29) -- cycle    ;
\draw [shift={(241.32,138.34)}, rotate = 360] [fill={rgb, 255:red, 232; green, 1; blue, 29 }  ,fill opacity=1 ][line width=0.08]  [draw opacity=0] (8.93,-4.29) -- (0,0) -- (8.93,4.29) -- cycle    ;

\draw (320,225) node [anchor=north west][inner sep=0.75pt]   [align=left] {$\displaystyle a_0$};
\draw (320,40) node [anchor=north west][inner sep=0.75pt]   [align=left] {$\displaystyle a_0$};
\draw (385,195.27) node [anchor=north west][inner sep=0.75pt]   [align=left] {$\displaystyle a_{-3}$};
\draw (250,70) node [anchor=north west][inner sep=0.75pt]   [align=left] {$\displaystyle a_1$};
\draw (409,133) node [anchor=north west][inner sep=0.75pt]   [align=left] {$\displaystyle a_2$};
\draw (225,133) node [anchor=north west][inner sep=0.75pt]   [align=left] {$\displaystyle a_2$};
\draw (383,70) node [anchor=north west][inner sep=0.75pt]   [align=left] {$\displaystyle a_{-1}$};
\draw (252,195) node [anchor=north west][inner sep=0.75pt]   [align=left] {$\displaystyle a_{3}$};

\end{tikzpicture}

    \caption{$X'(2+4)$}
    \label{Skew-2}
\end{minipage}%
\begin{minipage}{.5\textwidth}
\centering
\scalebox{0.82}{
\tikzset{every picture/.style={line width=0.75pt}} 

\begin{tikzpicture}[x=0.75pt,y=0.75pt,yscale=-1,xscale=1]

\draw   (434.33,145.67) -- (420.44,197.5) -- (382.5,235.44) -- (330.67,249.33) -- (278.83,235.44) -- (240.89,197.5) -- (227,145.67) -- (240.89,93.83) -- (278.83,55.89) -- (330.67,42) -- (382.5,55.89) -- (420.44,93.83) -- cycle ;
\draw [color={rgb, 255:red, 255; green, 2; blue, 2 }  ,draw opacity=1 ] [dash pattern={on 0.84pt off 2.51pt}]  (330.67,45) -- (330.67,246.33) ;
\draw [shift={(330.67,249.33)}, rotate = 270] [fill={rgb, 255:red, 255; green, 2; blue, 2 }  ,fill opacity=1 ][line width=0.08]  [draw opacity=0] (8.93,-4.29) -- (0,0) -- (8.93,4.29) -- cycle    ;
\draw [shift={(330.67,42)}, rotate = 90] [fill={rgb, 255:red, 255; green, 2; blue, 2 }  ,fill opacity=1 ][line width=0.08]  [draw opacity=0] (8.93,-4.29) -- (0,0) -- (8.93,4.29) -- cycle    ;
\draw [color={rgb, 255:red, 255; green, 0; blue, 0 }  ,draw opacity=1 ] [dash pattern={on 0.84pt off 2.51pt}]  (243.49,196) -- (417.85,95.33) ;
\draw [shift={(420.44,93.83)}, rotate = 150] [fill={rgb, 255:red, 255; green, 0; blue, 0 }  ,fill opacity=1 ][line width=0.08]  [draw opacity=0] (8.93,-4.29) -- (0,0) -- (8.93,4.29) -- cycle    ;
\draw [shift={(240.89,197.5)}, rotate = 330] [fill={rgb, 255:red, 255; green, 0; blue, 0 }  ,fill opacity=1 ][line width=0.08]  [draw opacity=0] (8.93,-4.29) -- (0,0) -- (8.93,4.29) -- cycle    ;
\draw [color={rgb, 255:red, 255; green, 0; blue, 0 }  ,draw opacity=1 ] [dash pattern={on 0.84pt off 2.51pt}]  (243.49,95.33) -- (417.85,196) ;
\draw [shift={(420.44,197.5)}, rotate = 210] [fill={rgb, 255:red, 255; green, 0; blue, 0 }  ,fill opacity=1 ][line width=0.08]  [draw opacity=0] (8.93,-4.29) -- (0,0) -- (8.93,4.29) -- cycle    ;
\draw [shift={(240.89,93.83)}, rotate = 30] [fill={rgb, 255:red, 255; green, 0; blue, 0 }  ,fill opacity=1 ][line width=0.08]  [draw opacity=0] (8.93,-4.29) -- (0,0) -- (8.93,4.29) -- cycle    ;

\draw (322,27) node [anchor=north west][inner sep=0.75pt]   [align=left] {$\displaystyle a_{0}$};
\draw (322,255) node [anchor=north west][inner sep=0.75pt]   [align=left] {$\displaystyle a_{0}$};
\draw (225,82) node [anchor=north west][inner sep=0.75pt]   [align=left] {$\displaystyle a_{2}$};
\draw (216,199) node [anchor=north west][inner sep=0.75pt]   [align=left] {$\displaystyle a_{-2}$};
\draw (422.44,195) node [anchor=north west][inner sep=0.75pt]   [align=left] {$\displaystyle a_{2}$};
\draw (423,84) node [anchor=north west][inner sep=0.75pt]   [align=left] {$\displaystyle a_{-2}$};
\draw (385,45) node [anchor=north west][inner sep=0.75pt]   [align=left] {$\displaystyle a_{-1}$};
\draw (260,45) node [anchor=north west][inner sep=0.75pt]   [align=left] {$\displaystyle a_{1}$};
\draw (209,139) node [anchor=north west][inner sep=0.75pt]   [align=left] {$\displaystyle a_{3}$};
\draw (436,139) node [anchor=north west][inner sep=0.75pt]   [align=left] {$\displaystyle a_{-3}$};
\draw (267,235) node [anchor=north west][inner sep=0.75pt]   [align=left] {$\displaystyle a_{5}$};
\draw (385,235) node [anchor=north west][inner sep=0.75pt]   [align=left] {$\displaystyle a_{-5}$};
\end{tikzpicture}}
\caption{$X'(3+6)$}
\label{Skew-3}
\end{minipage}
\end{figure}
\subsection{The Foundations}
Like in \cite{turner2023skew}, we let $X=\{a_0,a_1\}$ and $A=\GenA{X}$ be an axial algebra of Monster type $(\al,\bt)$. Set $\rho=\tu{a_0}\tu{a_1}$ and let 
\[ a_{2i}=a_0^{\rho^i} \text{ and } a_{2i+1}=a_1^{\rho^i}\]
for all $i\in \Z$. Each $a_j$ is an axis, since $\rho$ is an automorphism, and for ease of notation, we let $\tu{j}:=\tu{a_j}$. For $r\in \N$, let $s_{i,r}:=a_ia_{i+r}-\bt(a_i+a_{i+r})$. We now state Lemmas 6.1 and 6.2 in \cite{franchi20211} (Note that their paper has been restructured since our first paper thus the numberings are different). These will be useful for when we look at $k$ being even. 
\begin{lem}\label{grp s}
For $r\in\N$ and $i\in\Z$, we have $s_{i,r}$ is fixed by the group $\GenG{\tu{i},\tu{i+r}}$.
\proof Follows from the definition of the Miyamoto map. \qed
\end{lem}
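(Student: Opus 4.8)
The plan is to reduce invariance under the whole group $\GenG{\tu{i},\tu{i+r}}$ to invariance under its two generators, and then to exploit the two defining features of a Miyamoto involution: it is an algebra automorphism, and it negates precisely the $\bt$-eigenspace of its own axis while fixing everything else. Since $\GenG{\tu{i},\tu{i+r}}$ is generated by $\tu{i}$ and $\tu{i+r}$, it suffices to check that each of these two involutions fixes $s_{i,r}$; the full group then fixes it automatically. Moreover $s_{i,r}=a_ia_{i+r}-\bt(a_i+a_{i+r})$ is symmetric under interchanging $a_i$ and $a_{i+r}$, so it is enough to treat $\tu{i}$ and then invoke this symmetry for $\tu{i+r}$.

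To show $\tu{i}$ fixes $s_{i,r}$, first I would use that $\tu{i}$ is an automorphism and that $a_i\in A_1(a_i)\subseteq A_{\{1,0,\al\}}(a_i)$, whence $\tu{i}(a_i)=a_i$. Writing $b:=a_{i+r}$ and decomposing it along the eigenspaces of $\ad{a_i}$ as $b=\lm_{a_i}(b)a_i+b_0+b_\al+b_\bt$ with $b_\nu\in A_\nu(a_i)$, the definition of $\tu{i}$ gives $\tu{i}(b)=\lm_{a_i}(b)a_i+b_0+b_\al-b_\bt$, so that $b-\tu{i}(b)=2b_\bt$. Applying the automorphism $\tu{i}$ to $s_{i,r}$ and subtracting then collapses the $a_i$-terms and leaves
\begin{equation*}
s_{i,r}-\tu{i}(s_{i,r})=a_i\bigl(b-\tu{i}(b)\bigr)-\bt\bigl(b-\tu{i}(b)\bigr)=2\bigl(a_ib_\bt-\bt b_\bt\bigr).
\end{equation*}

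The final step is the one computation that makes everything vanish: since $b_\bt\in A_\bt(a_i)$ we have $a_ib_\bt=\bt b_\bt$ by definition of the eigenspace, so the right-hand side is $0$ and $\tu{i}(s_{i,r})=s_{i,r}$. Running the identical argument with the roles of $a_i$ and $a_{i+r}$ exchanged gives $\tu{i+r}(s_{i,r})=s_{i,r}$, and the two together yield invariance under all of $\GenG{\tu{i},\tu{i+r}}$. I do not expect a genuine obstacle here: this is really the observation that $s_{i,r}$ is built exactly so that the only component of $a_{i+r}$ that $\tu{i}$ can move, namely its $\bt$-part, is precisely what the correction term $-\bt a_{i+r}$ neutralises once multiplied against $a_i$. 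This is why the author can simply record that it follows from the definition of the Miyamoto map. The only background hypothesis in play is $\mathrm{char}\,\F\neq 2$, needed so that $\tu{i}$ is a genuine involution, but it plays no further role in the collapse above.
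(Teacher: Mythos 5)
Your proof is correct and is exactly the spelled-out version of the paper's one-line justification (``follows from the definition of the Miyamoto map''): decomposing $a_{i+r}$ along the eigenspaces of $\ad{a_i}$, using that $\tu{i}$ is an algebra automorphism negating only the $\bt$-part, and observing that $a_ib_\bt=\bt b_\bt$ makes the difference vanish, with the symmetry of $s_{i,r}$ handling $\tu{i+r}$ and generation handling the whole group. No gaps; this matches the intended argument.
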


\begin{lem}\label{mod s}
For $r\in \N$ and $i,j\in \Z$ such that $i\equiv j\Mod{r}$, we have $s_{i,r}=s_{j,r}$.
\proof Follows from Lemma \ref{grp s}. \qed
\end{lem}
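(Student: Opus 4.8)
The plan is to reduce the claim to a single step of size $r$ and then invoke Lemma \ref{grp s}. Since $i\equiv j\Mod{r}$, we may write $j=i+mr$ for some $m\in\Z$; arguing by induction on $|m|$, it suffices to prove that $s_{i,r}=s_{i+r,r}$ for every $i\in\Z$ (the downward step $s_{i-r,r}=s_{i,r}$ is the same identity read with $i$ replaced by $i-r$, so positive and negative $m$ are handled simultaneously).

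First I would use the explicit action of the Miyamoto involutions recorded in the Example, namely $a_\ell^{\tu{n}}=a_{2n-\ell}$, to compute the image of $s_{i,r}$ under $\tu{i+r}$. Since $\tu{i+r}$ is an algebra automorphism, it distributes over both the product and the linear combination, so
\[
s_{i,r}^{\,\tu{i+r}}=a_i^{\tu{i+r}}\,a_{i+r}^{\tu{i+r}}-\bt\bigl(a_i^{\tu{i+r}}+a_{i+r}^{\tu{i+r}}\bigr).
\]
The key book-keeping is that $\tu{i+r}$ fixes $a_{i+r}$ and sends $a_i$ to $a_{2(i+r)-i}=a_{i+2r}$; substituting these gives
\[
s_{i,r}^{\,\tu{i+r}}=a_{i+r}\,a_{i+2r}-\bt\bigl(a_{i+r}+a_{i+2r}\bigr)=s_{i+r,r}.
\]

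To finish, I would observe that $\tu{i+r}\in\GenG{\tu{i},\tu{i+r}}$, so Lemma \ref{grp s} tells us $s_{i,r}$ is fixed by $\tu{i+r}$, i.e.\ $s_{i,r}^{\,\tu{i+r}}=s_{i,r}$. Comparing with the computation above yields $s_{i,r}=s_{i+r,r}$, and the induction then delivers $s_{i,r}=s_{j,r}$. There is no serious obstacle here: the only thing one must get right is the \emph{choice} of involution. Applying $\tu{i+r}$ rather than $\tu{i}$ is what displaces the index $i$ by exactly $2r$ while leaving $i+r$ fixed, producing precisely the next element $s_{i+r,r}$; applying the other generator $\tu{i}$ instead shifts towards $s_{i-r,r}$, which is consistent and simply covers the opposite direction of the induction.
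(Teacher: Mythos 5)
Your proof is correct and is precisely the argument the paper compresses into ``Follows from Lemma \ref{grp s}'': applying $\tu{i+r}$, which fixes $s_{i,r}$ by that lemma while the axet action $a_\ell^{\tu{n}}=a_{2n-\ell}$ shows $s_{i,r}^{\tu{i+r}}=s_{i+r,r}$, gives the single step, and iterating in both directions yields the claim for all $i\equiv j\Mod{r}$. No gaps; your expansion simply makes the paper's one-line proof explicit.
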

For $i\in \Z$, we define $\lm_i:=\lm_{a_0}(a_i)$ and the following constants: 
\[ \gm_i:=\bt-\lm_i \text{ and } \ep_i:=(1-\al)\lm_i-\bt.\]
Finally, by the semisimplicity of $a_0$, we can write $a_i$ in terms of eigenvectors of $a_0$. We have 
\[ a_i=\lm_i a_0 +u_i+v_i+w_i\]
where $u_i\in A_0(a_0)$, $v_i\in A_\al(a_0)$ and $w_i\in A_\bt(a_0)$. 
\begin{lem}[\protect{\cite[Lemma 6.4]{franchi20211}}]\label{eigen}
For $i\in \Z$, we have
\begin{itemize}
    \item $u_i=\frac{1}{\al}(\ep_i a_0 +\frac{1}{2}(\al-\bt)(a_i+a_{-i})-s_{0,i})$,
    \item $v_i=\frac{1}{\al}(\gm_i a_0 +\frac{1}{2}\bt(a_i+a_{-i})+s_{0,i})$, and
    \item $w_i=\frac{1}{2}(a_i-a_{-i})$.
\end{itemize}
\end{lem}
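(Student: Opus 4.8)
The plan is to recover $u_i$, $v_i$, $w_i$ from two independent sources of linear information about $a_i$: the Miyamoto involution $\tu{0}$ of $a_0$, and the adjoint $\ad{a_0}$ acting on $a_i$.

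First I would apply $\tu{0}$ to the eigenvector decomposition $a_i=\lm_i a_0+u_i+v_i+w_i$. Since $\tu{0}$ is an algebra automorphism fixing $a_0$ which acts as the identity on $A_{\{1,0,\al\}}(a_0)$ and as $-1$ on $A_\bt(a_0)$, it fixes each summand except $w_i\mapsto -w_i$, so $a_i^{\tu{0}}=\lm_i a_0+u_i+v_i-w_i$. On the other hand the axet action gives $a_i^{\tu{0}}=a_{-i}$ (from $a_j^{\tu{i}}=a_{2i-j}$ with $i=0$). Comparing the two expressions yields
\[ a_{-i}=\lm_i a_0+u_i+v_i-w_i, \]
and adding and subtracting this against the decomposition of $a_i$ immediately gives
\[ w_i=\tfrac{1}{2}(a_i-a_{-i}), \qquad u_i+v_i=\tfrac{1}{2}(a_i+a_{-i})-\lm_i a_0. \]
This settles the formula for $w_i$ and reduces the problem to separating the $0$- and $\al$-eigencomponents $u_i$ and $v_i$.

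Next I would bring in the product $a_0 a_i$. Multiplying the decomposition by $a_0$ and reading off the eigenvalues gives $a_0 a_i=\lm_i a_0+\al v_i+\bt w_i$, so substituting into $s_{0,i}=a_0 a_i-\bt(a_0+a_i)$ and eliminating $w_i$ and $a_i$ produces a second linear relation of the shape $-\bt u_i+(\al-\bt)v_i=s_{0,i}+(\text{scalar})\,a_0$. Together with $u_i+v_i=\tfrac12(a_i+a_{-i})-\lm_i a_0$ this is a $2\times 2$ linear system in $u_i,v_i$ with determinant $-\al\neq 0$; solving it and simplifying the $a_0$-coefficients via $\gm_i=\bt-\lm_i$ and $\ep_i=(1-\al)\lm_i-\bt$ gives precisely the stated expressions for $u_i$ and $v_i$.

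The whole argument is routine linear algebra once it is set up, and I do not expect a genuine obstacle. The only real inputs are the two structural facts — that $\tu{0}$ realises $a_i\mapsto a_{-i}$, and that $a_0$ acts on $u_i,v_i,w_i$ with eigenvalues $0,\al,\bt$ — so the thing to watch is the bookkeeping of the scalar coefficients of $a_0$, checking that they collapse exactly to $\ep_i$ and $\gm_i$. The hypothesis $\operatorname{char}\F\neq 2$ is used precisely to invert the factors of $2$ in the symmetric/antisymmetric split that produces $w_i$.
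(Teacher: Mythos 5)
Your proof is correct and complete: the $\tau_0$-splitting gives $w_i$ and $u_i+v_i$, and combining with $a_0a_i=\lm_ia_0+\al v_i+\bt w_i$ via $s_{0,i}$ yields $\al u_i=(\al-\bt)\bigl(\tfrac12(a_i+a_{-i})-\lm_ia_0\bigr)-s_{0,i}+((1-\bt)\lm_i-\bt)a_0$ and $\al v_i=\bt\bigl(\tfrac12(a_i+a_{-i})-\lm_ia_0\bigr)+s_{0,i}-((1-\bt)\lm_i-\bt)a_0$, whose $a_0$-coefficients collapse to $\ep_i$ and $\gm_i$ exactly as you predicted (the determinant of your system is $\al$ up to sign, nonzero since $\al\notin\{0,1\}$). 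The paper itself gives no proof, importing the lemma from \cite[Lemma 6.4]{franchi20211}, and your derivation is essentially the standard argument behind that citation, so there is nothing to flag beyond the harmless sign slip on the determinant.
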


\section{The Odd Case}
In this section, we will fix $k>1$ and odd. Our method is to create two subalgebras of $A$. First, a skew subalgebra with axet $X'(1+2)$ as shown in the previous paper \cite{turner2023skew}. Second, a symmetric subalgebra with axet $X(2k)$. With these two subalgebras, our possible cases are heavily reduced.  For this section, $(A,X)$ is an axial algebra of Monster type $(\al,\bt)$ with axet $X'(k+2k)$ for $k>1$ and odd. 

\begin{prop}[\protect{\cite[Proposition 8.1]{turner2023skew}}]
\label{odd k}
In $X'(k+2k)$, there exists a subaxet isomorphic to $X'(1+2)$. 
\end{prop}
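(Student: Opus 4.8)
The plan is to produce the required subaxet explicitly as the closure $\GenG{a_0,a_k}$ and to check that it reproduces both the combinatorial and the group-theoretic data of $X'(1+2)$. The oddness of $k$ enters at the very first step: since $k$ is odd, $a_k$ is an \emph{odd} vertex of $X'(k+2k)$, so it is not subject to the skew identification, whereas the identification $a_0=a_{2k}$ (the case $i=0$ of $a_{2i}=a_{2(i+k)}$) still holds. This is precisely the configuration mimicking $X'(1+2)$, where the single even vertex satisfies $b_0=b_2$ while the two odd vertices $b_1,b_{-1}$ remain distinct; this is also the point at which the argument must fail for even $k$ (handled separately), since then $a_k$ would be even and $a_{-k}=a_k$.

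First I would compute the closure of $\{a_0,a_k\}$ using $a_j^{\tu{i}}=a_{2i-j}$. Applying $\tu{0}$ sends $a_k\mapsto a_{-k}$ and fixes $a_0$; applying $\tu{k}$ gives $a_0\mapsto a_{2k}=a_0$, fixes $a_k$, and sends $a_{-k}\mapsto a_{3k}=a_{-k}$ (indices mod $4k$). Hence $Y:=\{a_0,a_k,a_{-k}\}$ is closed. I would then record that the three vertices are genuinely distinct: $a_0$ is even while $a_{\pm k}$ are odd (using $k$ odd), and $a_k\ne a_{-k}$ because $2k\not\equiv 0\Mod{4k}$. So $|Y|=3$, matching $X'(1+2)$.

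The core of the argument is to verify the axet isomorphism via $\psi\colon b_0\mapsto a_0,\ b_1\mapsto a_k,\ b_{-1}\mapsto a_{-k}$, together with the group map sending $\tu{0}\mapsto\tu{0}$ and $\tu{1}\mapsto\tu{k}$. The decisive structural feature to isolate is the \emph{asymmetric} fixing pattern: $\tu{0}$ fixes the even vertex and swaps the odd pair, while $\tu{k}$ fixes \emph{all three} vertices of $Y$ (exactly as $\tu{b_1}$ does in $X'(1+2)$, and in contrast to a reflection of the regular triangle $X(3)$, which swaps two of its vertices). This is what certifies that $Y$ is skew and isomorphic to $X'(1+2)$, rather than to a regular or degenerate $3$-point axet. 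I would also note that $\tu{-k}=\tu{k}$ in $G=D_{4k}$ (reflections through opposite vertices coincide), mirroring $\tu{b_{-1}}=\tu{b_1}$, so that $\tau'$ restricted to $Y\times C_2$ indeed lands in $H:=\GenG{\tu{0},\tu{k}}$.

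The step I expect to demand the most care is confirming that the correspondence respects the group data, not merely the permutation action on three points. Concretely, one checks that $H$ has the same abstract structure as the group of $X'(1+2)$: the product $\tu{0}\tu{k}=\rho^k$ acts on $X'(k+2k)$ as a half-turn of order $2$ (whereas $\rho$ itself has order $2k$), so $H$ is a Klein four-group, exactly as for $X'(1+2)$. Once the generator-to-generator map is seen to be a group isomorphism compatible with $\psi$ and with the restriction of $\tau'$, the axet isomorphism $Y\cong X'(1+2)$ follows.
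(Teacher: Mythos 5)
Your proposal is correct and takes essentially the same route as the paper: the paper's Proposition 8.1 (reused verbatim in the proof of Proposition \ref{odd skew}(1) here, with $a=a_{m+k}$, $b=a_m$, $c=a_{m+2k}$) works with exactly the same three-point configuration --- your set $\{a_0,a_k,a_{-k}\}$ is this configuration with $m=-k$ --- computes its closure, and certifies skewness by the same fixing pattern $b^{\tu{a}}=c$ and $a^{\tu{b}}=a^{\tu{c}}=a$. Your additional check that $\GenG{\tu{0},\tu{k}}$ is a Klein four-group is harmless extra detail beyond what the paper records.
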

We illustrate the above Proposition in Figure \ref{1subaxet} taking $k=3$. The next proposition is an extension of Corollary 8.2 in \cite{turner2023skew}.
\begin{figure}[ht]
    \centering
  
\scalebox{.7}{
\tikzset{every picture/.style={line width=0.75pt}} 

\begin{tikzpicture}[x=0.75pt,y=0.75pt,yscale=-1,xscale=1]

\draw   (461,192.5) -- (444.45,254.25) -- (399.25,299.45) -- (337.5,316) -- (275.75,299.45) -- (230.55,254.25) -- (214,192.5) -- (230.55,130.75) -- (275.75,85.55) -- (337.5,69) -- (399.25,85.55) -- (444.45,130.75) -- cycle ;
\draw  [color={rgb, 255:red, 74; green, 144; blue, 226 }  ,draw opacity=1 ] (337.5,69) -- (461.25,192.75) -- (337.5,316.5) -- (213.75,192.75) -- cycle ;
\draw [color={rgb, 255:red, 255; green, 0; blue, 0 }  ,draw opacity=1 ] [dash pattern={on 0.84pt off 2.51pt}]  (337.5,72) -- (337.5,313) ;
\draw [shift={(337.5,316)}, rotate = 270] [fill={rgb, 255:red, 255; green, 0; blue, 0 }  ,fill opacity=1 ][line width=0.08]  [draw opacity=0] (8.93,-4.29) -- (0,0) -- (8.93,4.29) -- cycle    ;
\draw [shift={(337.5,69)}, rotate = 90] [fill={rgb, 255:red, 255; green, 0; blue, 0 }  ,fill opacity=1 ][line width=0.08]  [draw opacity=0] (8.93,-4.29) -- (0,0) -- (8.93,4.29) -- cycle    ;

\draw (331,53) node [anchor=north west][inner sep=0.75pt]   [align=left] {$\displaystyle a_{0}$};
\draw (331,320) node [anchor=north west][inner sep=0.75pt]   [align=left] {$\displaystyle a_{0}$};
\draw (196,185) node [anchor=north west][inner sep=0.75pt]   [align=left] {$\displaystyle a_{3}$};
\draw (465,185) node [anchor=north west][inner sep=0.75pt]   [align=left] {$\displaystyle a_{-3}$};
\end{tikzpicture}}
    \caption{Representation of $X'(3+6)$ having a subaxet of $X'(1+2)$. }
    \label{1subaxet}
\end{figure}
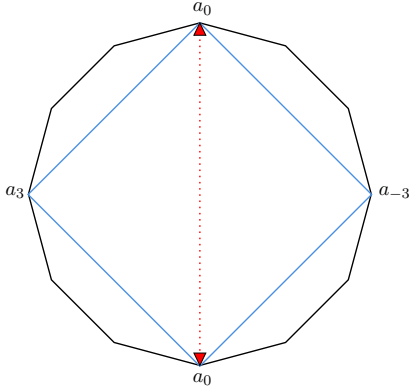
\begin{prop}\label{odd skew}
Let $m\in \Z$ and odd, $S_m:=\GenA{a_m,a_{m+k}}$ and $U_m:=\GenA{a_m,a_{m+2k}}$. 
\begin{itemize}
    \item[$1$.] Then $S_m$ is a skew axial algebra with axet $X'(1+2)$ and is isomorphic to $\TC(\al,1-\al)$, $Q_2(\frac{1}{3},\frac{2}{3})$, or $Q_2(\frac{1}{3})^\times \oplus \GenG{\id}$. Further, $\al+\bt=1$. 
    \item[$2$.] We have that 
    \begin{itemize}
    \item[$(i)$] $U_m\cong \TC(\al)$ if and only if $S_m\cong \TC(\al,1-\al)$ with $\al \neq -1$,
    \item[$(ii)$] $U_m\cong \TC(-1)^\times$ if and only if $S_m\cong \TC(-1,2)$, and 
    \item[$(iii)$] $U_m\cong \TB$ if and only if $S_m\cong Q_2(\frac{1}{3},\frac{2}{3})$ with field characteristic not equal to $5$ or $S_m\cong Q_2(\frac{1}{3})^\times \oplus \GenG{\id}$ with field characteristic equal to $5$.
\end{itemize}
\item[$3$.] For $n\in \Z$ and odd, $U_m\cong U_n$ and $S_m \cong S_n$. 
\end{itemize}
\proof Fix $m\in \Z$ and odd. 
\begin{itemize}
    \item[$1$.] Let $a=a_{m+k}$, $b=a_m$ and $c=a_{m+2k}$. We have $b^{\tu{a}}=c$ and $a^{\tu{b}}=a^{\tu{c}}=a$ and so $S_m=\GenA{a,b}$ is skew with axet $X'(1+2)$. By Corollary 8.2 in \cite{turner2023skew}, $\al+\bt=1$ and claim 1 follows from Theorem 1.1 in \cite{turner2023skew}.  
    
    \item[$2$.] Note that $U_m$ is a subalgebra of $S_m$. This follows from looking at the structure of each possible $S_m$ which are explicitly constructed in Section 3 of \cite{turner2023skew}. 
     
    \item[$3$.] Suppose $n\in \Z$ and is odd, and  $l=\frac{n+m}{2}$. We have that $a_n^{\tu{l}}=a_m$ and $a_{n+2k}^{\tu{l}}=a_{m-2k}=a_{m+2k}$. Hence $U_n^{\tu{l}}=U_m$. As $\tu{l}$ is an automorphism for the whole algebra, it produces a isomorphism between $U_n$ and $U_m$. By $2$ and assuming $U_m\not\cong \TB$, $S_n\cong S_m$. If $U_m\cong \TB$, then  $S_m$ could be two options. As $Q_2(\frac{1}{3},\frac{2}{3})$ and $Q_2(\frac{1}{3})^\times \oplus \GenG{\id}$ do not exist in the same characteristic, $S_m$ can only be one option and $S_m\cong S_n$.
\end{itemize} \qed
\end{prop}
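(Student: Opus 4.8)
The plan is to dispatch the three claims in turn, leaning on the classification of $X'(1+2)$-algebras from \cite{turner2023skew} for claim~$1$, on explicit computation inside the three resulting algebras for claim~$2$, and on a single Miyamoto automorphism for claim~$3$. Throughout, the only facts I need about the axet are the action $a_j^{\tu i}=a_{2i-j}$ and the skew identity $a_{2i}=a_{2i+2k}$, together with the observation that odd indices are identified only modulo $4k$.

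For claim~$1$, I would first exhibit $\{a_{m+k},a_m,a_{m+2k}\}$ as an $X'(1+2)$-subaxet. Setting $a=a_{m+k}$, $b=a_m$, $c=a_{m+2k}$, the point is that $m+k$ is even (since $m$ and $k$ are both odd) while $m$ and $m+2k$ are odd. A direct computation then gives $b^{\tu a}=a_{m+2k}=c$, and $a^{\tu b}=a_{m-k}=a_{m+k}=a$ and $a^{\tu c}=a_{m+3k}=a_{m+k}=a$, the last two equalities using the skew identity on the even index $m+k$. These are precisely the relations defining $X'(1+2)$: the even axis $a$ is fixed by $\tu b$ and $\tu c$, while $\tu a$ interchanges the two odd axes $b$ and $c$. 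Hence $S_m=\GenA{a,b}$ is skew with axet $X'(1+2)$, and claim~$1$ then follows directly from Corollary~$8.2$ and Theorem~$1.1$ of \cite{turner2023skew}.

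For claim~$2$, the starting observation is that $c=b^{\tu a}\in S_m$, so $U_m=\GenA{b,c}$ is a subalgebra of $S_m$. I would then run through each of the three possibilities for $S_m$ produced by claim~$1$ and, using the explicit multiplication tables constructed in Section~$3$ of \cite{turner2023skew}, compute the subalgebra generated by the two odd axes $b$ and $c$ and read off its isomorphism type. The expected pattern is that the generic $\TC(\al,1-\al)$ yields $U_m\cong \TC(\al)$, that the degenerate value $\al=-1$ (that is, $S_m\cong\TC(-1,2)$) yields the quotient $\TC(-1)^\times$, and that both $Q_2$-type algebras yield $U_m\cong\TB$. Establishing the stated equivalences amounts to checking that these assignments are bijective on isomorphism types, except that the two $Q_2$ algebras are merged by the passage to $U_m$. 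For claim~$3$, fix odd $n$ and put $l=\tfrac{n+m}{2}\in\Z$; then $a_n^{\tu l}=a_{2l-n}=a_m$ and $a_{n+2k}^{\tu l}=a_{m-2k}=a_{m+2k}$, the last equality by the period-$4k$ identification on odd indices, so $\tu l$ carries the generators of $U_n$ to those of $U_m$. Since $\tu l\in\mathrm{Aut}(A)$ this yields $U_n\cong U_m$, and the isomorphism $S_n\cong S_m$ is then recovered from $U_m$ via claim~$2$, the only ambiguous case being $U_m\cong\TB$; there I would invoke that $Q_2(\tfrac{1}{3},\tfrac{2}{3})$ and $Q_2(\tfrac{1}{3})^\times\oplus\GenG{\id}$ occur in mutually exclusive characteristics (not $5$ versus $5$), so over a fixed $\F$ only one is possible and $S_m\cong S_n$ follows.

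The main obstacle is claim~$2$: claim~$1$ is essentially a citation after the index computation, and claim~$3$ is a one-line automorphism argument plus the characteristic dichotomy, whereas identifying $U_m$ requires genuine work inside each explicit algebra. In particular, the delicate points are detecting the two degenerations correctly --- the collapse of $\TC(\al)$ to $\TC(-1)^\times$ at $\al=-1$, and the characteristic-$5$ behaviour separating the two $Q_2$ algebras --- since it is exactly there that the bijectivity of the correspondence, on which the deduction of $S_m\cong S_n$ in claim~$3$ rests, can break down if handled carelessly.
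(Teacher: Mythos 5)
Your proposal is correct and takes essentially the same route as the paper's own proof: the identical subaxet computation $a=a_{m+k}$, $b=a_m$, $c=a_{m+2k}$ followed by the citations to Corollary 8.2 and Theorem 1.1 of \cite{turner2023skew} for claim 1, the same inspection of the explicit Section 3 constructions to see $U_m\leq S_m$ and read off its type for claim 2, and the same automorphism $\tu{(n+m)/2}$ together with the characteristic-$5$ dichotomy between $Q_2(\frac{1}{3},\frac{2}{3})$ and $Q_2(\frac{1}{3})^\times\oplus\GenG{\id}$ for claim 3. The parity checks you add in claim 1 and your explicit remark that deducing $S_n\cong S_m$ rests on the bijectivity of the correspondence in claim 2 merely make explicit what the paper leaves implicit.
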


Let $L:=\GenA{a_{-1},a_1}$. It is a $2$-generated axial algebra and is of Monster type $(\al,\bt)$. Remember that a $2$-generated axial algebra, $\GenA{a,b}$, is \emph{symmetric} if there is an involutionary automorphism $f$ such that $a^f=b$. We have that $\tu{0}$ switches $a_1$ and $a_{-1}$ thus $L$ is symmetric. Finally, $L$ has axet $X(2k)$ since it only has the odd axes in its axet. We have illustrated this subaxet for $k=3$ in Figure \ref{3 subaxet}. To simplify notation, we let $b_i:=a_{2i-1}$ for $i\in \Z$ making $L=\GenA{b_0,b_1}$. We now recall the classification of all symmetric axial algebras of Monster type. 

\begin{thm}[\cite{yabe2023classification, franchi2022classifying, franchi2022quotients}]\label{Symm Thm}
    Let $V$ be a symmetric $2$-generated $\mon{\al,\bt}$-axial algebra. Then $V$ is isomorphic to one of the following:
    \begin{enumerate}
        \item [$1.$] an axial algebra of Jordan type $\al$ or $\bt$;
        \item [$2.$] a quotient of the Highwater algebra $\hw$, or its characteristic 5 cover $\hwc$, where $(\al,\bt)=(2,\frac{1}{2})$; or
        \item[$3.$] a quotient of one of the following algebras (given in Table $2$ in \cite{yabe2023classification}):
        \begin{itemize}
    \item [$(a)$] $\TA(\al,\bt)$, $\FA(\frac{1}{4},\bt)$, $\FB(\al,\frac{\al^2}{2})$, $\FJ(2\bt,\bt)$, $\FY(\frac{1}{2},\bt)$, $\FY(\al,\frac{1-\al^2}{2})$,\newline
    $\CA(\al,\frac{5\al-1}{2})$, $\SA(\al, -\frac{\al^2}{4(2\al-1)})$, $\SJ(2\bt,\bt)$, and $\SY(\frac{1}{2},2)$;
    \item [$(b)$] $\IY{3}(\al,\frac{1}{2},\mu)$ and $\IY{5}(\al,\frac{1}{2})$.
\end{itemize}
\end{enumerate}
where $(a)$ are the algebras with fixed finite axets $X(n)$ with $n$ being the number before the capital letter\footnote{The capital letter comes from either the algebra being a generalisation of a Norton-Sakuma algebra or from the discoverer (Joshi and Yabe)} and $(b)$ are the algebras which have varying axets depending on an extra parameter and/or field characteristic.  
\end{thm}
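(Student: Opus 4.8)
The plan is to organise the classification around the dihedral symmetry forced by $V=\GenA{a,b}$ together with the flip $f$ satisfying $a^f=b$. First I would record that the Miyamoto involutions $\tu{a}$ and $\tu{b}$ generate a dihedral group (possibly infinite), that $f$ conjugates $\tu{a}$ to $\tu{b}$, and that the orbit of $a$ and $b$ under $\rho:=\tu{a}\tu{b}$ is controlled by the order of $\rho$. This makes the size $n$ of the axet $X(n)$ --- finite or infinite --- the primary case division, which is exactly the division reflected in parts $(a)$ and $(b)$ of item $3$ and in the Highwater item $2$.

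Second, I would peel off the Jordan-type case: if for every axis the $\bt$-eigenspace (or the $\al$-eigenspace) is trivial, the surviving fusion rules reduce to those of a Jordan fusion law and $V$ is of Jordan type $\al$ or $\bt$, a class already classified. This isolates item $1$ and lets me assume henceforth that the full Monster fusion law of $\mon{\al,\bt}$ is genuinely in force.

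For the genuine Monster-type case I would expand the products $a_ia_j$ using the eigenvector decomposition of Lemma~\ref{eigen} and impose the fusion rule A$3$ together with primitivity A$4$, while tracking the $C_2$-grading that $\tu{a}$ induces on the $\bt$-part. This converts the closure condition on the axet into a finite system of polynomial identities in the structure constants $\lm_i$ and the parameters $\al,\bt$. Solving the system, the generic solutions force $\rho$ to have a fixed finite order $n$ together with a rigid relation tying $\bt$ to $\al$ (for instance $\SA(\al,-\tfrac{\al^2}{4(2\al-1)})$), yielding the finite list in $3(a)$; the remaining solutions retain a free parameter $\mu$ or a characteristic-$5$ degeneracy, producing the families $\IY{3}$ and $\IY{5}$ of $3(b)$.

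The hard part --- and the reason the statement must cite three separate works --- is the exceptional locus $(\al,\bt)=(2,\tfrac12)$, where the polynomial identities never close the axet at a finite size. Here one obtains the infinite-dimensional Highwater algebra $\hw$, and in characteristic $5$ its cover $\hwc$; completing the classification then requires classifying \emph{every} quotient of $\hw$ and $\hwc$ and proving that no further infinite-dimensional algebras occur. I expect this Highwater analysis, rather than the finite enumeration, to be the principal obstacle, since it demands a direct understanding of the ideal lattice of an infinite-dimensional algebra and of its collapse to finite dimension in small characteristic.
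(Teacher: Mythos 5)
You should first note a structural mismatch: the paper does not prove this statement at all. Theorem \ref{Symm Thm} is imported wholesale from \cite{yabe2023classification}, \cite{franchi2022classifying} and \cite{franchi2022quotients}, so the ``paper's own proof'' is the citation. Judged against what those works actually do, your outline gets the architecture right: the dihedral group $\GenG{\tu{a},\tu{b}}$ with the flip $f$, the early separation of the Jordan-type case (one of the $\al$- or $\bt$-eigenspaces vanishing at every axis), the finite-versus-infinite axet dichotomy, and the identification of $(\al,\bt)=(2,\frac{1}{2})$ as the locus where the algebra fails to close at finite size and the Highwater algebra $\hw$ (with its characteristic-$5$ cover $\hwc$) appears. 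Your attribution of the division of labour --- the finite enumeration on one side, the Highwater and quotient analysis on the other --- matches why three separate references are needed.

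However, as a proof the proposal has a genuine gap, and it sits exactly where all the content of the theorem lies: your third paragraph. The claim that imposing the fusion law A$3$ and primitivity A$4$ ``converts the closure condition on the axet into a finite system of polynomial identities'' whose solutions are precisely the ten families of $3(a)$ plus $\IY{3}(\al,\frac{1}{2},\mu)$ and $\IY{5}(\al,\frac{1}{2})$ is not an argument but a restatement of the conclusion. Nothing in the sketch explains why the system is finite or how the universal $2$-generated object is to be controlled, why the only families with a free parameter or characteristic dependence force $\bt=\frac{1}{2}$, why each finite entry carries its rigid parameter relation (e.g.\ $\bt=-\frac{\al^2}{4(2\al-1)}$ for $\SA$), or why $(2,\frac{1}{2})$ is the \emph{unique} exceptional locus; Yabe's actual proof is a long, delicate case analysis over the parameter space, not the solution of one polynomial system. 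A second omission: the theorem classifies these algebras \emph{and their quotients}, and determining the ideal lattices even of the finite-dimensional entries is itself substantial work (this is \cite{franchi2022quotients}, and it is precisely what the present paper consumes later, e.g.\ the quotient list of $\SA(\al,-\frac{\al^2}{4(2\al-1)})$ in Lemma \ref{6A quot lem}); your sketch acknowledges the quotient problem only for $\hw$ and $\hwc$. So: correct skeleton and a correct sense of where the difficulty lives, but the central enumeration is assumed rather than proved.
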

\begin{note}
We use notation from \cite{mcinroy2023forbidden} for the algebras in $3$. This focuses on the axet rather than the axial dimension of each algebra. Further,  the possible axets of $\IY{3}(\al,\frac{1}{2},\mu)$ and $\IY{5}(\al,\frac{1}{2})$ have been found in the same paper.
\end{note}
\begin{rem}
An axial algebra cannot be symmetric and skew at the same time. Suppose so, then the flip automorphism would map odd axes to even axes and vice versa. Since there are $k$ even axes and $2k$ odd axes, $f$ is not bijective producing a contradiction. 
\end{rem}
\begin{figure}
    \centering
\tikzset{every picture/.style={line width=0.75pt}} 
\begin{minipage}{.5\textwidth}
\centering
\scalebox{.8}{
\begin{tikzpicture}[x=0.75pt,y=0.75pt,yscale=-1,xscale=1]

\draw   (434.33,145.67) -- (420.44,197.5) -- (382.5,235.44) -- (330.67,249.33) -- (278.83,235.44) -- (240.89,197.5) -- (227,145.67) -- (240.89,93.83) -- (278.83,55.89) -- (330.67,42) -- (382.5,55.89) -- (420.44,93.83) -- cycle ;
\draw [color={rgb, 255:red, 255; green, 0; blue, 0 }  ,draw opacity=1 ] [dash pattern={on 0.84pt off 2.51pt}]  (330.67,45) -- (330.67,246.33) ;
\draw [shift={(330.67,249.33)}, rotate = 270] [fill={rgb, 255:red, 255; green, 0; blue, 0 }  ,fill opacity=1 ][line width=0.08]  [draw opacity=0] (8.93,-4.29) -- (0,0) -- (8.93,4.29) -- cycle    ;
\draw [shift={(330.67,42)}, rotate = 90] [fill={rgb, 255:red, 255; green, 0; blue, 0 }  ,fill opacity=1 ][line width=0.08]  [draw opacity=0] (8.93,-4.29) -- (0,0) -- (8.93,4.29) -- cycle    ;
\draw [color={rgb, 255:red, 255; green, 0; blue, 0 }  ,draw opacity=1 ] [dash pattern={on 0.84pt off 2.51pt}]  (243.49,196) -- (417.85,95.33) ;
\draw [shift={(420.44,93.83)}, rotate = 150] [fill={rgb, 255:red, 255; green, 0; blue, 0 }  ,fill opacity=1 ][line width=0.08]  [draw opacity=0] (8.93,-4.29) -- (0,0) -- (8.93,4.29) -- cycle    ;
\draw [shift={(240.89,197.5)}, rotate = 330] [fill={rgb, 255:red, 255; green, 0; blue, 0 }  ,fill opacity=1 ][line width=0.08]  [draw opacity=0] (8.93,-4.29) -- (0,0) -- (8.93,4.29) -- cycle    ;
\draw [color={rgb, 255:red, 255; green, 0; blue, 0 }  ,draw opacity=1 ] [dash pattern={on 0.84pt off 2.51pt}]  (243.49,95.33) -- (417.85,196) ;
\draw [shift={(420.44,197.5)}, rotate = 210] [fill={rgb, 255:red, 255; green, 0; blue, 0 }  ,fill opacity=1 ][line width=0.08]  [draw opacity=0] (8.93,-4.29) -- (0,0) -- (8.93,4.29) -- cycle    ;
\draw [shift={(240.89,93.83)}, rotate = 30] [fill={rgb, 255:red, 255; green, 0; blue, 0 }  ,fill opacity=1 ][line width=0.08]  [draw opacity=0] (8.93,-4.29) -- (0,0) -- (8.93,4.29) -- cycle    ;

\draw (322,27) node [anchor=north west][inner sep=0.75pt]   [align=left] {$\displaystyle a_{0}$};
\draw (322,255) node [anchor=north west][inner sep=0.75pt]   [align=left] {$\displaystyle a_{0}$};
\draw (225,82) node [anchor=north west][inner sep=0.75pt]   [align=left] {$\displaystyle a_{2}$};
\draw (216,199) node [anchor=north west][inner sep=0.75pt]   [align=left] {$\displaystyle a_{-2}$};
\draw (422.44,195) node [anchor=north west][inner sep=0.75pt]   [align=left] {$\displaystyle a_{2}$};
\draw (423,84) node [anchor=north west][inner sep=0.75pt]   [align=left] {$\displaystyle a_{-2}$};
\draw (383,43) node [anchor=north west][inner sep=0.75pt]   [align=left] {$\displaystyle a_{-1}$};
\draw (265,43) node [anchor=north west][inner sep=0.75pt]   [align=left] {$\displaystyle a_{1}$};
\draw (209,137) node [anchor=north west][inner sep=0.75pt]   [align=left] {$\displaystyle a_{3}$};
\draw (436,137) node [anchor=north west][inner sep=0.75pt]   [align=left] {$\displaystyle a_{-3}$};
\draw (267,234) node [anchor=north west][inner sep=0.75pt]   [align=left] {$\displaystyle a_{5}$};
\draw (383,234) node [anchor=north west][inner sep=0.75pt]   [align=left] {$\displaystyle a_{-5}$};

\end{tikzpicture}}
\end{minipage}%
\begin{minipage}{.5\textwidth}
\centering

\scalebox{.75}{\tikzset{every picture/.style={line width=0.75pt}} 

\begin{tikzpicture}[x=0.75pt,y=0.75pt,yscale=-1,xscale=1]

\draw   (433.33,147.83) -- (381.08,238.33) -- (276.58,238.33) -- (224.33,147.83) -- (276.58,57.33) -- (381.08,57.33) -- cycle ;

\draw (381,42) node [anchor=north west][inner sep=0.75pt]   [align=left] {$\displaystyle b_{0}$};
\draw (262,42) node [anchor=north west][inner sep=0.75pt]   [align=left] {$\displaystyle b_{1}$};
\draw (208,140) node [anchor=north west][inner sep=0.75pt]   [align=left] {$\displaystyle b_{2}$};
\draw (435,140) node [anchor=north west][inner sep=0.75pt]   [align=left] {$\displaystyle b_{-1}$};
\draw (262,235) node [anchor=north west][inner sep=0.75pt]   [align=left] {$\displaystyle b_{3}$};
\draw (381,235) node [anchor=north west][inner sep=0.75pt]   [align=left] {$\displaystyle b_{-2}$};
\draw [color={rgb, 255:red, 255; green, 0; blue, 0 }  ,draw opacity=1 ] [dash pattern={on 0.84pt off 2.51pt}]  (328.83,40) -- (328.83,250) ;
\draw (323.83,25.35) node [anchor=north west][inner sep=0.75pt]   [align=left] {$\displaystyle \tu{0}$};
\end{tikzpicture}}
\end{minipage}
\caption{Representation of $X'(3+6)$ having a subaxet of $X(6)$ which is symmetric.}
\label{3 subaxet}
\end{figure}
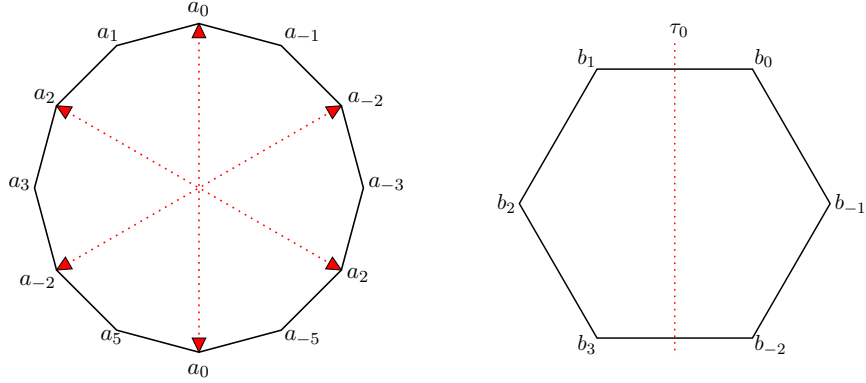
\begin{prop}\label{Odd Prop}
We have $k=3$ and $L$ is isomorphic to $\SA(\al,-\frac{\al^2}{4(2\al-1)})$, $\SJ(2\bt,\bt)$ or their quotients, with $\al+\bt=1$. 
\proof From the previous discussion, $L$ is a $2$-generated symmetric $\mon{\al,\bt}$-axial algebra with axet $X(2k)$. By Proposition \ref{odd skew}, we have $\al+\bt=1$ and $\al$ nor $\bt$ can be equal to $\frac{1}{2}$. Thus $L$ cannot be a $\hw$, $\hwc$, $\IY{3}(\al,\frac{1}{2},\mu)$, $\IY{5}(\al,\frac{1}{2})$ or any of their quotients. If $L$ was of Jordan type $\bt\neq\frac{1}{2}$, its axet would be $X(2)$ or $X(3)$ and cannot happen. Looking at the remaining algebras which have axet $X(2k)$, the largest is $X(6)$ thus $k=3$. As $\al\neq \frac{1}{2}$, we have $L$ cannot be isomorphic to $\SY(\frac{1}{2},2)$ or its quotients.  \qed
\end{prop}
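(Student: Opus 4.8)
The plan is to identify $L$ by feeding it into the classification of symmetric $2$-generated $\mon{\al,\bt}$-axial algebras, Theorem~\ref{Symm Thm}, and then eliminating every possibility except $\SA$ and $\SJ$. From the discussion preceding the statement we already know that $L=\GenA{b_0,b_1}$ is symmetric (via $\tu{0}$), is of Monster type $(\al,\bt)$, and has the regular axet $X(2k)$ with $k>1$ odd. The numerical input I would exploit comes from Proposition~\ref{odd skew}: the skew subalgebra $S_m$ forces $\al+\bt=1$. Since $A$ is of Monster type we have $\al\neq\bt$, and because $\al+\bt=1$ the equalities $\al=\tfrac12$, $\bt=\tfrac12$ and $\al=\bt$ are all equivalent; hence $\al\neq\tfrac12$ and $\bt\neq\tfrac12$.

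Armed with these constraints I would run through the three items of Theorem~\ref{Symm Thm}. Item~$2$ (the Highwater algebra $\hw$ and its cover $\hwc$) together with $\IY{3}(\al,\tfrac12,\mu)$ and $\IY{5}(\al,\tfrac12)$ from item~$3(b)$ all live at $\bt=\tfrac12$, so they are ruled out immediately; note that $\hw,\hwc$ additionally have $\al+\bt=\tfrac52\neq1$. For item~$1$, a $2$-generated algebra of Jordan type has axet $X(2)$ or $X(3)$, which is strictly smaller than $X(2k)$ for any $k>1$, so this case cannot produce our $L$.

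This leaves the fixed-axet algebras of item~$3(a)$, and here I would use the fact (recorded in the Note after Theorem~\ref{Symm Thm}) that each such algebra carries a prescribed axet $X(n)$. The constraint is that $L$ has axet $X(2k)$ with $k>1$ odd, i.e. an even-gon on at least six vertices whose half-size is odd. The axets $X(3)$ and $X(5)$ are odd-gons and so never of the form $X(2k)$; the axet $X(4)$ corresponds to $k=2$, which is even. The only remaining possibility is $X(6)$, giving $k=3$, and the algebras with this axet are $\SA(\al,-\tfrac{\al^2}{4(2\al-1)})$, $\SJ(2\bt,\bt)$ and $\SY(\tfrac12,2)$. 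Since $\SY(\tfrac12,2)$ has $\al=\tfrac12$, it is excluded, and I conclude $k=3$ with $L$ isomorphic to a quotient of $\SA(\al,-\tfrac{\al^2}{4(2\al-1)})$ or $\SJ(2\bt,\bt)$.

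The genuinely load-bearing part of the argument is not any computation but the bookkeeping of axets: I must be certain that no symmetric $\mon{\al,\bt}$-algebra with $\bt\neq\tfrac12$ has axet $X(2k)$ for odd $k\ge5$. This is exactly where the shape of Theorem~\ref{Symm Thm} is essential: the only algebras whose axets grow without bound are $\IY{3}$ and $\IY{5}$, and these are pinned to $\bt=\tfrac12$, while every algebra compatible with $\bt\neq\tfrac12$ has an axet of bounded size topping out at $X(6)$. So the main obstacle is really one of having complete and correct axet data for the classification, after which the elimination is forced.
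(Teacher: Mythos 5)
Your proposal is correct and follows essentially the same route as the paper: feed $L$ into Theorem~\ref{Symm Thm}, use $\al+\bt=1$ from Proposition~\ref{odd skew} to kill the Highwater and $\mathrm{IY}$ cases, rule out Jordan type by axet size, and observe that among the fixed-axet algebras only $X(6)$ is of the form $X(2k)$ with $k>1$ odd, excluding $\SY(\frac{1}{2},2)$ since $\al\neq\frac{1}{2}$. Your explicit derivation that $\al+\bt=1$ and $\al\neq\bt$ together force $\al,\bt\neq\frac{1}{2}$ is a slightly more self-contained justification of a step the paper attributes directly to Proposition~\ref{odd skew}, but the argument is otherwise identical.
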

We now focus on $L$ being isomorphic to $\SJ(\frac{2}{3},\frac{1}{3})$ or to its quotient $\SJ(\frac{2}{3},\frac{1}{3})^\times$ if the characteristic is equal to $5$. As $\al=2\bt$, we can use work done in \cite{franchi20212}.
\begin{thm}[\protect{\cite[Theorem 1.1]{franchi20212}}]\label{2beta}
Let $V$ be a $2$-generated $\mon{2\bt,\bt}$-axial algebra. Then $V$ is symmetric or $V$ is either isomorphic to $\TC(\frac{2}{3},\frac{1}{3})$, $Q_2(\bt)$, or $Q_2(-\frac{1}{2})^\times$. 
\end{thm}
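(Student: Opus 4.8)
The statement is a complete classification of $2$-generated $\mon{2\bt,\bt}$-axial algebras, so the plan is to run the general $2$-generated machinery while exploiting the special relation $\al=2\bt$, peeling off the symmetric case first. Write $V=\GenA{a,b}$ with primitive axes $a=a_0$, $b=a_1$, set $\rho=\tu{a}\tu{b}$ and $a_{2i}=a_0^{\rho^i}$, $a_{2i+1}=a_1^{\rho^i}$, so that $V$ is spanned by the $a_i$ together with their products. If some automorphism swaps $a$ and $b$, then $V$ is symmetric and is governed by Theorem \ref{Symm Thm}; hence the real work is to show that a \emph{non}-symmetric $V$ must be $\TC(\frac{2}{3},\frac{1}{3})$, $Q_2(\bt)$, or $Q_2(-\frac{1}{2})^\times$.

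First I would expand the neighbouring axes in eigenvectors of $a_0$ as in Lemma \ref{eigen}, recording the structure constants $\lm_i$, $\gm_i$, $\ep_i$ and the fixed vectors $s_{0,i}$ coming from Lemmas \ref{grp s} and \ref{mod s}. The decisive simplification is the fusion rule $\bt\star\bt=\{1,0,\al\}$: with $\al=2\bt$ a product of two $\bt$-eigenvectors of $a_0$ lands in $A_{\{1,0,2\bt\}}(a_0)$, and the remaining rules $\al\star\al=\{1,0\}$ and $\al\star\bt=\{\bt\}$ then tie the $\al$- and $\bt$-parts together. Requiring the products $a_0(a_0a_1)$, $a_1(a_0a_1)$ and $a_0 s_{0,1}$ to respect the fusion law yields a system of polynomial relations in the $\lm_i$, and the substitution $\al=2\bt$ is precisely what collapses that system to something solvable.

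The key step is to read off from these relations that the closed axet generated by $\{a_0,a_1\}$ is forced to be small, and in the non-symmetric branch to be skew of type $X'(1+2)$. Concretely one shows that either the axet closes up after three axes, giving the low-dimensional algebra $\TC(\frac{2}{3},\frac{1}{3})$ (which occurs only at $\bt=\frac{1}{3}$), or the $\bt$-eigenvector data degenerates so that $a_0$ and $a_1$ generate a two-axis algebra of $Q_2$ shape; the quotient $Q_2(-\frac{1}{2})^\times$ then appears exactly in the characteristic where the generic structure data of $Q_2(\bt)$ becomes singular. I would close by checking that each surviving candidate genuinely satisfies the $\mon{2\bt,\bt}$ fusion law, so that the list is not merely necessary but actually realized.

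The main obstacle is the structure-constant bookkeeping: proving that the polynomial system arising from $\bt\star\bt\subseteq\{1,0,2\bt\}$ admits no high-dimensional solutions, and separating the characteristic-specific degenerations cleanly—in particular isolating $Q_2(-\frac{1}{2})^\times$ without conflating it with the generic $Q_2(\bt)$. A secondary difficulty is keeping the symmetric locus rigorously disjoint from the skew locus throughout the case split, so that the dichotomy in the statement is genuinely exhaustive.
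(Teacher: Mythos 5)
You should first note what the paper actually does with this statement: it does not prove it at all --- it is imported verbatim as Theorem 1.1 of \cite{franchi20212}, and the citation \emph{is} the proof. Your proposal therefore attempts to recreate, in a page, an entire external classification paper. As written it is a programme rather than a proof: every decisive step is asserted, not established. ``The substitution $\al=2\bt$ collapses the system to something solvable,'' ``one shows that the axet closes up after three axes,'' ``the $\bt$-eigenvector data degenerates'' --- these sentences name exactly the work that occupies the bulk of \cite{franchi20212}, namely producing an explicit finite spanning set (the axes $a_{-1},\dots,a_2$ together with elements of type $s_{i,j}$), deriving the resultant polynomial relations among the $\lm_i$, and running the case analysis that rules out higher-dimensional non-symmetric solutions. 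None of that bookkeeping is carried out or even set up concretely, so the central claim --- no high-dimensional non-symmetric algebras --- is left entirely unproved.

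There is also a genuine mathematical error in the one structural claim you do commit to: that in the non-symmetric branch the closed axet ``is forced to be skew of type $X'(1+2)$.'' This is refuted by the theorem's own conclusion. The family $Q_2(\bt)$ exists for essentially all $\bt$ and is non-symmetric, yet by Corollary 8.2 of \cite{turner2023skew} (invoked in Proposition \ref{odd skew} of this paper) a Monster-type algebra with skew axet $X'(1+2)$ forces $\al+\bt=1$, which under $\al=2\bt$ means $\bt=\frac{1}{3}$; so for $\bt\neq\frac{1}{3}$ the algebra $Q_2(\bt)$ cannot be skew, and its axet is regular even though no automorphism flips the two generators. You are conflating two different dichotomies: symmetric versus non-symmetric is a property of the \emph{algebra} (existence of a flip automorphism), while regular versus skew is a property of the \emph{axet}; the Remark in Section 3 gives only the implication skew $\Rightarrow$ non-symmetric, never the converse. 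Consequently your case split ``symmetric, or skew of type $X'(1+2)$'' is not exhaustive, and the argument as structured would miss precisely the one-parameter family $Q_2(\bt)$ that is the heart of the theorem. (Relatedly, be careful not to identify $Q_2(\frac{1}{3},\frac{2}{3})$, which in \cite{turner2023skew} is an algebra for the fusion law $\mon{\frac{1}{3},\frac{2}{3}}$, with the $\mon{2\bt,\bt}$-algebras $Q_2(\bt)$ appearing here; the parameters sit in different fusion laws.)
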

\begin{lem}\label{6J lem}
We have $L\not\cong \SJ(\frac{2}{3},\frac{1}{3})$ or $L\not\cong \SJ(\frac{2}{3},\frac{1}{3})^\times$. 
\proof Suppose $L\cong \SJ(\frac{2}{3},\frac{1}{3})$ or $L\cong \SJ(\frac{2}{3},\frac{1}{3})^\times$. As $L\leq A$, $A$ must be at least $7$-dimensional and has fusion law $\mon{2\bt,\bt}$. By Theorem \ref{2beta}, as $A$ has dimension greater than $4$, $A$ is symmetric producing a contradiction. \qed 
\end{lem}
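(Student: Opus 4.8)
The plan is to argue by contradiction, exploiting the observation that the hypothesis forces a parameter identity which brings the \emph{entire} algebra $A$ — not merely the subalgebra $L$ — under the classification of Theorem~\ref{2beta}. First I would note that $L\cong\SJ(\frac{2}{3},\frac{1}{3})$ (or its quotient) forces $(\al,\bt)=(\frac{2}{3},\frac{1}{3})$: this is consistent with the identity $\al+\bt=1$ from Proposition~\ref{Odd Prop}, and it gives $\al=2\bt$. Consequently $A$ is itself a $2$-generated $\mon{2\bt,\bt}$-axial algebra, which is precisely the setting of \cite{franchi20212}.

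Next I would run a dimension argument. Since $L=\GenA{a_{-1},a_1}$ is a subalgebra of $A$, we have $\dim A\geq\dim L$; and I would record that both $\SJ(\frac{2}{3},\frac{1}{3})$ and its characteristic-$5$ quotient $\SJ(\frac{2}{3},\frac{1}{3})^\times$ have dimension exceeding $4$ (the algebra $\SJ$, having axet $X(6)$, being $7$-dimensional). Hence $\dim A>4$. Applying Theorem~\ref{2beta}, $A$ is either symmetric or isomorphic to one of $\TC(\frac{2}{3},\frac{1}{3})$, $Q_2(\bt)$, or $Q_2(-\frac{1}{2})^\times$; as each of these three exceptional algebras has dimension at most $4$, the bound $\dim A>4$ rules them out and forces $A$ to be symmetric.

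Finally I would close with the skew/symmetric incompatibility. The algebra $(A,X)$ has the skew axet $X'(k+2k)$, and by the Remark noting that an axial algebra cannot be both symmetric and skew, $A$ cannot be symmetric: a flip automorphism would have to interchange the $k$ even axes with the $2k$ odd axes, which cannot be a bijection since $k\neq 2k$. This contradicts the conclusion of the previous paragraph, so the original hypothesis fails and neither $L\cong\SJ(\frac{2}{3},\frac{1}{3})$ nor $L\cong\SJ(\frac{2}{3},\frac{1}{3})^\times$ can occur.

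I expect the only delicate point to be the dimension bookkeeping: the whole contradiction hinges on $\dim L$ strictly exceeding the dimension ceiling ($4$) of the non-symmetric algebras in Theorem~\ref{2beta}, so I would want to pin down the dimensions of $\SJ(\frac{2}{3},\frac{1}{3})$ and its quotient precisely (citing the construction in \cite{yabe2023classification}) rather than rely on a rough estimate. The remaining ingredients — the parameter identity $\al=2\bt$, the subalgebra inclusion $L\leq A$, and the Remark on skewness — are immediate from what is already established.
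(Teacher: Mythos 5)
Your proposal is correct and takes essentially the same route as the paper's proof: from $L\leq A$ deduce $\dim A>4$ with fusion law $\mon{2\bt,\bt}$, invoke Theorem~\ref{2beta} to force $A$ to be symmetric, and contradict the skew axet $X'(k+2k)$. The only difference is that you spell out what the paper leaves implicit — the dimensions of the three non-symmetric algebras in Theorem~\ref{2beta} and the appeal to the Remark that a skew algebra cannot be symmetric — which is sound bookkeeping, not a new idea.
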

We now focus on $L$ being isomorphic to $\SA(\al,-\frac{\al^2}{4(2\al-1)})$ or one of its quotients. 

\begin{lem}\label{6A quot lem}
We have that $L$ is not isomorphic to any non-trivial quotient of $\SA(\al,-\frac{\al^2}{4(2\al-1)})$. 
\proof By \cite{yabe2023classification}, the only symmetric quotients of $\SA(\al,-\frac{\al^2}{4(2\al-1)})$ are
\begin{enumerate}
    \item $\SA(\frac{2}{3},-\frac{1}{3})^\times$,
    \item $\SA(\frac{1\pm \sqrt{97}}{24},\frac{53\pm 5\sqrt{97}}{192})^\times$ with characteristic not equal to $11$, or
    \item $\SA(2,-4)^\times$ with characteristic equal to $11$.
\end{enumerate}
For the first case, assuming $\al+\bt=1$, we get that $1=3$ and contradicts the characteristic of $\F$. For the second case,  more arithmetic is involved. Invoking $\al+\bt=1$, we get
\begin{eqnarray*}
    1 &=&\frac{1\pm \sqrt{97}}{24}+\frac{53\pm 5\sqrt{97}}{192} = \frac{1}{192}\left(8\pm 8\sqrt{97}+53\pm 5\sqrt{97}\right)\\
    &=& \frac{1}{192}\left(61\pm 13\sqrt{97}\right)
\end{eqnarray*} 
Therefore $192 = 61 \pm 13\sqrt{97}$. This produces $131=\pm 13\sqrt{97}$. Squaring both sides, we get $17161=16393$ thus $768=0$. Since $768=2^8\times 3$, we get the characteristic is equal to $2$ or $3$ producing a contradiction. For the third case, we have $\al+\bt=2-4=-2\neq 1$ in characteristic $11$ and generates the final contradiction. \qed
\end{lem}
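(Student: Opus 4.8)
The plan is to reduce the lemma to a finite case analysis: first pin down which algebras can actually occur as non-trivial symmetric quotients of $\SA(\al,-\frac{\al^2}{4(2\al-1)})$, and then eliminate each candidate using the rigid constraint $\al+\bt=1$ supplied by Proposition \ref{odd skew}. Concretely, I would first invoke the classification of quotients in \cite{yabe2023classification}, which yields a short list of non-trivial symmetric quotients: $\SA(\frac{2}{3},-\frac{1}{3})^\times$, the pair $\SA(\frac{1\pm\sqrt{97}}{24},\frac{53\pm 5\sqrt{97}}{192})^\times$ (valid when the characteristic is not $11$), and $\SA(2,-4)^\times$ (valid in characteristic $11$). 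Since $L$ is assumed to be such a quotient, its parameters $(\al,\bt)$ must be one of these pairs, and I would then simply test whether each pair is compatible with $\al+\bt=1$.

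For two of the three candidates the check is immediate. For $\SA(\frac{2}{3},-\frac{1}{3})^\times$ one has $\al+\bt=\frac{1}{3}$, so $\al+\bt=1$ forces $1=3$, i.e.\ characteristic $2$, which is excluded; and for $\SA(2,-4)^\times$ one has $\al+\bt=-2\neq 1$ in characteristic $11$. The genuine work, and the step I expect to be the main obstacle, is the middle family, whose parameters involve the surd $\sqrt{97}$. Here I would substitute into $\al+\bt=1$, clear the common denominator $192$ to reach $192=61\pm 13\sqrt{97}$, isolate the irrational part as $131=\pm 13\sqrt{97}$, and then square to eliminate the root, producing a plain integer identity. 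Reducing that identity modulo the characteristic should collapse it to a small integer equal to zero, namely $768=2^{8}\cdot 3$, so the characteristic must divide $768$ and is therefore $2$ or $3$. Characteristic $2$ is ruled out by our standing assumption, while characteristic $3$ is impossible since the denominators $24$ and $192$ defining the parameters would vanish; either way the candidate collapses. The only subtlety is that squaring is not reversible, but since we only need the forward implication from $\al+\bt=1$ to $768=0$ in order to reach a contradiction, no spurious solution can rescue the candidate.
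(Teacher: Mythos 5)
Your proposal matches the paper's proof essentially step for step: the same appeal to \cite{yabe2023classification} for the three candidate quotients, the same immediate eliminations of $\SA(\frac{2}{3},-\frac{1}{3})^\times$ and $\SA(2,-4)^\times$, and the same manipulation $192=61\pm 13\sqrt{97}$, isolate, square, conclude $768=2^8\cdot 3=0$, forcing characteristic $2$ or $3$. Your two added remarks --- that squaring only needs to be a forward implication, and that characteristic $3$ is excluded because the denominators $24$ and $192$ vanish --- are correct small elaborations of points the paper leaves implicit, not a different argument.
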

In Table \ref{Mult Table}, we have stated the algebra multiplication of $\SA(\al,-\frac{\al^2}{4(2\al-1)})$ taken from \cite{mcinroy2023forbidden}. Notice that we have included $\gm=\frac{\al}{8(2\al-1)}$ to simplify some of the expressions.
\begin{table}[h]
\centering
\scalebox{0.77}{
\begin{tabular}{|c|c|c|} 
\hline
Type & Basis & Multiplication\\
\hline
\hline
$\SA(\al,-\frac{\al^2}{4(2\al-1)})$ & $b_{-2}$, ..., $b_3$, & $b_ib_{i+1}=\frac{\bt}{2}(b_i+b_{i+1}-b_{i+2}-b_{i+3}-b_{i-1}-b_{i-2}+c+z)$\\
$\gm=\frac{\al}{8(2\al-1)}$& $c$, $z$ & $b_ib_{i+2}=\frac{\al}{4}(b_i+b_{i+2})+2\gm(3\al-1) b_{i+4}-\gm(5\al-2) z$\\
& & $b_ib_{i+3}=\frac{\al}{2}(b_i+b_{i+3}-c)$, $b_ic = \frac{\al}{2}(b_i+c-b_{i+3})$\\
& & $b_iz =2\gm(3\al-2)(2b_i -b_{i-2}-b_{i+2}+z)$\\
& & $c^2=c$ , $cz=0$, $z^2 = \frac{2\gm}{\al}(\al+2)(3\al-2)z$\\
\hline
 \end{tabular}}
    \caption{The multiplication of $\SA(\al,-\frac{\al^2}{4(2\al-1)})$.}\label{Mult Table}
\end{table}

We now assume $L\cong \SA(\al,-\frac{\al^2}{4(2\al-1)})$. Let $m\in \Z$ and be odd. Looking at the multiplication of $\SA(\al,-\frac{\al^2}{4(2\al-1)})$, we have $U_m=\GenA{a_m,a_{m+6}}=\GenA{b_n, b_{n+3}}\cong \TC(\al)$ for $n=2m-1$. By Proposition \ref{odd skew}, $S_m\cong \TC(\al,1-\al)$ with $\al\neq -1$ for all $m$. As $U_m= S_m$, we have $a_{m+3}\in L$ for all $m$. Hence all the axes in the axet, in particular $a_0$ and $a_1$, are in $L$ and so $L=A$.  

\begin{lem}\label{6A lem}
We have $A\not\cong \SA(\al,-\frac{\al^2}{4(2\al-1)})$.
\proof 
By the above, $\al\neq -1$ and $S_3=U_3=\GenA{a_3,a_{-3}}=\GenA{b_2,b_{-1}}\cong \TC(\al)$, thus we have three possible $\jor{\bt}$-axes for $a_0$. These are $\id-b_2$, $\id-b_{-1}$ or $\id-c$, with $\id=\frac{1}{\al+1}(b_2+b_{-1}+c)$ being the identity of $U_3$. The first two options cannot happen as their Miyamoto involution fixes $b_2$ and $b_{-1}$ respectively, which $\tu{0}$ does not do. Hence
$$a_0=\id-c = \frac{1}{\al+1}(b_2+b_{-1}-\al c).$$  
We have that $a_1-a_{-1}=b_1-b_0$ is a $\bt$-eigenvector of $a_0$ and  
\begin{eqnarray}
    (\al+1)\bt(b_1-b_0)&=& (\al+1)a_0(b_1-b_0)\nonumber\\
    &=& (b_2+b_{-1}-\al c)(b_1-b_0)\nonumber\\
    &=&b_1b_2+b_{-1}b_1-\al c b_1 -b_0b_2-b_{-1}b_0+\al cb_0\nonumber\\
    &=&\frac{\bt}{2}b_1+\frac{\bt}{2}b_2-\frac{\bt}{2}b_0-\frac{\bt}{2}b_3-\frac{\bt}{2}b_{-1}-\frac{\bt}{2}b_{-2}+\frac{\bt}{2}c\nonumber\\
    &+&\frac{\bt}{2}z+\frac{\al}{4}b_{-1} +\frac{\al}{4}b_1+2(3\al-1)\gm b_3-(5\al-2)\gm z\nonumber\\
    &-&\frac{\al^2}{2}c -\frac{\al^2}{2}b_1+\frac{\al^2}{2}b_{-2}\nonumber\\
    &-&\frac{\al}{4}b_0 -\frac{\al}{4}b_2-2(3\al-1)\gm b_{-2}+(5\al-2)\gm z\nonumber\\
    &-&\frac{\bt}{2}b_{-1}-\frac{\bt}{2}b_0+\frac{\bt}{2}b_1+\frac{\bt}{2}b_2+\frac{\bt}{2}b_3+\frac{\bt}{2}b_{-2}-\frac{\bt}{2}c\nonumber\\
    &-&\frac{\bt}{2}z+\frac{\al^2}{2}c+\frac{\al^2}{2}b_0-\frac{\al^2}{2}b_3.\label{6A eqn}
\end{eqnarray}
Looking at the $b_2$ coefficient of Equation \eqref{6A eqn}, we have
\[ 0 = \frac{\bt}{2}-\frac{\al}{4}+\frac{\bt}{2} = \bt-\frac{\al}{4}.\]
Therefore $\al=4\bt$ and equivalently
\begin{eqnarray*}
    \al=4\left(\frac{-\al^2}{4(2\al-1)}\right)=\frac{-\al^2}{(2\al-1)}.
\end{eqnarray*}
Solving for $\al$, we either have $\al=0$ or $\al=\frac{1}{3}$. As the former is a contradiction on the fusion law, we assume $\al=\frac{1}{3}$. For the $b_{-2}$ coefficient of Equation \eqref{6A eqn}, observe that
\[ 0 = -\frac{\bt}{2}+\frac{\al^2}{2}-2(3\al-1)\gm+\frac{\bt}{2}=\frac{\al^2}{2}-2(3\al-1)\gm=\frac{1}{18}.\]
This is an obvious contradiction. \qed
\end{lem}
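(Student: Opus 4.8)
The plan is to argue by contradiction: assume $A\cong \SA(\al,-\frac{\al^2}{4(2\al-1)})$, so that by the discussion preceding the lemma we have $A=L$ with $\al\neq -1$, and $A$ is equipped with the multiplication recorded in Table \ref{Mult Table} on the basis $\{b_{-2},\ldots,b_3,c,z\}$. The strategy is to write the generating axis $a_0$ explicitly in this basis and then to test it against a known $\bt$-eigenvector using the multiplication table; the resulting coefficient comparisons will force impossible numerical identities.

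The first task is to pin down $a_0$. I would use the subalgebra $U_3=S_3=\GenA{a_3,a_{-3}}=\GenA{b_2,b_{-1}}$, which is $\TC(\al)$ by Proposition \ref{odd skew}, with identity $\id=\frac{1}{\al+1}(b_2+b_{-1}+c)$. Since $a_0$ is a $\jor{\bt}$-axis lying in this subalgebra, it must be one of the three candidates $\id-b_2$, $\id-b_{-1}$, $\id-c$ obtained by subtracting each of the three axes $b_2,b_{-1},c$ of $U_3$ from $\id$. To discard the first two I would observe that the Miyamoto involution attached to $\id-b_2$ fixes $b_2=a_3$ and that attached to $\id-b_{-1}$ fixes $b_{-1}=a_{-3}$, whereas $\tu{0}$ interchanges $a_3$ and $a_{-3}$. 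This leaves $a_0=\id-c=\frac{1}{\al+1}(b_2+b_{-1}-\al c)$.

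With $a_0$ determined, the rest is a single eigenvector identity. By Lemma \ref{eigen}, $b_1-b_0=a_1-a_{-1}$ is twice the $\bt$-eigenvector $w_1$ of $a_0$, so $a_0(b_1-b_0)=\bt(b_1-b_0)$; multiplying by $\al+1$ and expanding $(b_2+b_{-1}-\al c)(b_1-b_0)$ through the products $b_ib_{i\pm1}$ and $b_ic$ of Table \ref{Mult Table} gives a single relation among the basis vectors. Comparing the $b_2$-coefficients yields $\bt=\frac{\al}{4}$, i.e. $\al=4\bt$; feeding this into $\bt=-\frac{\al^2}{4(2\al-1)}$ leaves $\al=\frac13$ (the root $\al=0$ being forbidden by the fusion law). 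Substituting $\al=\frac13$ into the $b_{-2}$-coefficient then collapses to $0=\frac{1}{18}$, the contradiction we want.

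The conceptual content is short — the only real choice is recognising that the action of $\tu{0}$ forces $a_0=\id-c$ rather than the other two idempotents — so the main obstacle is purely computational: the expansion of $(\al+1)a_0(b_1-b_0)$ spreads over all of $b_{-2},\ldots,b_3$ together with $c$ and $z$, and every coefficient must be collected without error. Picking out the $b_2$- and $b_{-2}$-coefficients, rather than the $c$- or $z$-coefficients, is what makes both contradictions surface with the least arithmetic, so the plan is to organise the computation around those two entries.
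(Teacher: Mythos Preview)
Your proposal is correct and follows essentially the same route as the paper: identify $a_0=\id-c$ inside $U_3\cong\TC(\al)$ by eliminating $\id-b_2$ and $\id-b_{-1}$ via the action of $\tu{0}$, then expand $(\al+1)a_0(b_1-b_0)$ and read off the $b_2$- and $b_{-2}$-coefficients to force $\al=\tfrac13$ and then $0=\tfrac{1}{18}$. One small slip in your description: the expansion also uses the $b_ib_{i+2}$ products (e.g.\ $b_{-1}b_1$ and $b_0b_2$), not just $b_ib_{i\pm1}$ and $b_ic$; you clearly know this since you note that $z$ appears, but be sure to include those terms when you carry out the computation.
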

\proof[Proof of Theorem $1.1$]
By Proposition \ref{Odd Prop}, we have $k=3$, and $L\cong \SA(\al,\bt)$, $L\cong \SJ(\al,\bt)$ or any quotient with $\al+\bt=1$. By Lemmas \ref{6J lem}, \ref{6A quot lem}, and \ref{6A lem}, all cases are exhausted and so $A$ cannot exist. \qed 

\section{The Even Case}
We now look at when $k$ is even. As we cannot relate even skew axets with $X'(1+2)$, we need to change our approach. We will do this in two steps. First, we will restrict the problem to $k=2^n$ for $n \in \N$ and it has a similar proof to Proposition \ref{odd k}. With this result, we will look at $X'(2^n + 2^{n+1})$ and show that no axial algebra, of conditions that we desire, can have that axet. Finally, we will relate this to all even skew axets. 

\begin{no}
    For this section, we will let $k=mq$ for $m,q \in \N$ with $m$ odd and $q=2^n$ for some $n\in \N$.
\end{no}
To remind the reader, 
\[X(4k)=\{a_j \; | \; a_j=a_{j+4k} \;\forall j \in \Z\}\]
and $X'(k+2k)$ is a quotient set of $X(4k)$ with the extra identity of $a_{2i}=a_{2(k+i)}$ for all $i\in \Z$. 
\begin{prop}\label{even reduction}
In $X'(k+2k)$, there is a subaxet isomorphic to $X'(q+2q)$. 
\proof Let $X=\{a_0, a_m\}$ and $Z=\GenG{X}$. We will show that $Z$ is isomorphic to $X'(q+2q)$. Applying $\tu{0}$ and $\tu{m}$, one can see that $Z=\{a_{mi} \; | \; i \in \Z\}$. Since we have $a_j=a_{j+4k}$ for all $j\in \Z$, we can express $Z$ with $i\in \{0,...,4q-1\}$. Thus $|Z|\leq 4q$. With the skew identity, we are still double counting some elements. Let $l\in \{0,...,2(q-1)\}$ and even. We have that $a_{lm}=a_{lm+2k}$ and $|Z|=3q$. As there are $q$ even axis and $2q$ odd axis in $Z$, it is not regular and so $Z$ is isomorphic to $X'(q+2q)$. \qed
\end{prop}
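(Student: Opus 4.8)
The plan is to work entirely inside the ambient axet $X(4k)$ and to pin down the closure $Z:=\GenG{a_0,a_m}$ by tracking the indices it contains, then to exhibit an explicit $C_2$-axet isomorphism $Z\to X'(q+2q)$ rather than relying on a bare cardinality count. Since $\tu{i}$ acts by $a_j\mapsto a_{2i-j}$, the composite $\tu{m}\tu{0}$ is the translation $a_j\mapsto a_{j+2m}$, so repeatedly applying $\tu{0}$ and $\tu{m}$ to $\{a_0,a_m\}$ produces exactly the multiples of $m$: I would first show $Z=\{a_{mi}\mid i\in\Z\}$, and note it is closed because $a_{mi}^{\tu{mj}}=a_{m(2j-i)}\in Z$.

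The second step is to reduce the index set modulo the two defining identifications of $X'(k+2k)$. From $a_j=a_{j+4k}$ and $4k=4mq$, the elements $a_{mi}$ and $a_{mi'}$ coincide already when $i\equiv i'\Mod{4q}$, and conversely $m(i-i')\equiv 0\Mod{4mq}$ forces $i\equiv i'\Mod{4q}$; hence $i$ may be taken in $\{0,\dots,4q-1\}$ and $|Z|\le 4q$. Next I would invoke the skew identity $a_{2t}=a_{2(t+k)}$: writing an even multiple of $m$ as $a_{m\cdot 2t}=a_{2(mt)}=a_{2(mt+k)}=a_{m\cdot 2(t+q)}$ (using $2k=2mq$), the even values of $i$ collapse exactly two-to-one as $2t\sim 2t+2q$, leaving $q$ distinct ``even'' axes, while the $2q$ ``odd'' values of $i$ remain pairwise distinct. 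The crucial input here is that $m$ is odd, so that $mi$ is even if and only if $i$ is even; this is what guarantees that the skew relation, which only acts on even-indexed vertices of $X(4k)$, touches precisely the even-$i$ elements and no others.

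Finally I would define $\phi:Z\to X'(q+2q)$ by $a_{mi}\mapsto a_i$, where on the right the $a_i$ are the standard elements of $X'(q+2q)$ viewed as the quotient of $X(4q)$ by $a_{2t}=a_{2(t+q)}$. The computation of the second step shows that the relations holding among the $a_{mi}$ in $Z$ are exactly $4q$-periodicity together with $a_{2t}=a_{2(t+q)}$, which are precisely the relations defining $X'(q+2q)$, so $\phi$ is well defined and, by the count $|Z|=3q=|X'(q+2q)|$, a bijection; equivariance follows from $\phi(a_{mi}^{\tu{mj}})=a_{2j-i}=\phi(a_{mi})^{\tu{j}}$. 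I expect the main obstacle to be the simultaneous bookkeeping of the two identifications in the second step---verifying that periodicity and the skew relation together collapse the even indices exactly two-to-one while keeping the odd ones distinct---since this is the point at which the hypothesis $m$ odd is genuinely used; a cardinality argument plus ``not regular, hence skew'' would also close the proof, but the explicit map $\phi$ is the cleanest way to certify the isomorphism type.
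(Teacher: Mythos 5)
Your proposal is correct, and its first two steps coincide with the paper's: the same orbit computation giving $Z=\{a_{mi}\mid i\in\Z\}$, the same reduction modulo $4q$-periodicity, and the same observation that the skew identity collapses exactly the even-$i$ elements (via $a_{m\cdot 2t}=a_{2(mt)}=a_{2(mt+k)}=a_{m\cdot 2(t+q)}$, using $2k=2mq$), yielding $|Z|=3q$ with $q$ even and $2q$ odd axes. Where you genuinely diverge is the final identification: the paper stops at the count and concludes ``$Z$ is not regular, hence $Z\cong X'(q+2q)$,'' which implicitly invokes the dichotomy of M\textsuperscript{c}Inroy and Shpectorov that a $2$-generated $C_2$-axet is either regular or skew; you instead construct the explicit map $\phi: a_{mi}\mapsto a_i$ and verify well-definedness, bijectivity and equivariance ($\phi(a_{mi}^{\tu{mj}})=a_{2j-i}=\phi(a_{mi})^{\tu{j}}$) directly. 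Your route is self-contained and certifies the isomorphism type without the classification input, and it is also slightly more careful than the paper at the counting step: you check the converse implications ($m(i-i')\equiv 0\Mod{4mq}$ forces $i\equiv i'\Mod{4q}$, and the skew relation touches only even $i$ since $m$ is odd, so the collapse is exactly two-to-one on evens and the odd indices stay pairwise distinct), which the paper asserts without comment in writing $|Z|=3q$. The trade-off is length: the paper's appeal to the regular-or-skew dichotomy makes the last step a one-liner. One pedantic remark: with the paper's convention $a_j^{\tu{i}}=a_{2i-j}$, the composite $\tu{m}\tu{0}$ is translation by $-2m$ rather than $+2m$ (the other order gives $+2m$); this is immaterial since either generates the same cyclic group of translations.
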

We illustrate the above Proposition for $k=6$ with $q=2$ and $m=3$ in Figure \ref{2subaxet}. For the rest of the section, we assume $A$ to be an axial algebra of Monster type with axet $X'(q+2q)$. Our aim is to show that $A$ cannot exist.
\begin{figure}
    \centering
\scalebox{.66}{\tikzset{every picture/.style={line width=0.75pt}} 

\begin{tikzpicture}[x=0.75pt,y=0.75pt,yscale=-1,xscale=1]

\draw   (463.52,158) -- (458.95,192.68) -- (445.56,225) -- (424.27,252.75) -- (396.52,274.05) -- (364.2,287.43) -- (329.52,292) -- (294.83,287.43) -- (262.52,274.05) -- (234.76,252.75) -- (213.47,225) -- (200.08,192.68) -- (195.52,158) -- (200.08,123.32) -- (213.47,91) -- (234.76,63.25) -- (262.52,41.95) -- (294.83,28.57) -- (329.52,24) -- (364.2,28.57) -- (396.52,41.95) -- (424.27,63.25) -- (445.56,91) -- (458.95,123.32) -- cycle ;
\draw  [color={rgb, 255:red, 74; green, 144; blue, 226 }  ,draw opacity=1 ] (463.52,158) -- (424.27,252.75) -- (329.52,292) -- (234.76,252.75) -- (195.52,158) -- (234.76,63.25) -- (329.52,24) -- (424.27,63.25) -- cycle ;
\draw [color={rgb, 255:red, 255; green, 0; blue, 0 }  ,draw opacity=1 ] [dash pattern={on 0.84pt off 2.51pt}]  (329.52,27) -- (329.52,289) ;
\draw [shift={(329.52,292)}, rotate = 270] [fill={rgb, 255:red, 255; green, 0; blue, 0 }  ,fill opacity=1 ][line width=0.08]  [draw opacity=0] (8.93,-4.29) -- (0,0) -- (8.93,4.29) -- cycle    ;
\draw [shift={(329.52,24)}, rotate = 90] [fill={rgb, 255:red, 255; green, 0; blue, 0 }  ,fill opacity=1 ][line width=0.08]  [draw opacity=0] (8.93,-4.29) -- (0,0) -- (8.93,4.29) -- cycle    ;
\draw [color={rgb, 255:red, 255; green, 0; blue, 0 }  ,draw opacity=1 ] [dash pattern={on 0.84pt off 2.51pt}]  (198.52,158) -- (460.52,158) ;
\draw [shift={(463.52,158)}, rotate = 180] [fill={rgb, 255:red, 255; green, 0; blue, 0 } ,fill opacity=1 ][line width=0.08]  [draw opacity=0] (8.93,-4.29) -- (0,0) -- (8.93,4.29) -- cycle    ;
\draw [shift={(195.52,158)}, rotate = 360] [fill={rgb, 255:red, 255; green, 0; blue, 0 }  ,fill opacity=1 ][line width=0.08]  [draw opacity=0] (8.93,-4.29) -- (0,0) -- (8.93,4.29) -- cycle    ;

\draw (322,8) node [anchor=north west][inner sep=0.75pt]   [align=left] {$\displaystyle a_{0}$};
\draw (322,294) node [anchor=north west][inner sep=0.75pt]   [align=left] {$\displaystyle a_{0}$};
\draw (177,152) node [anchor=north west][inner sep=0.75pt]   [align=left] {$\displaystyle a_{6}$};
\draw (467,152) node [anchor=north west][inner sep=0.75pt]   [align=left] {$\displaystyle a_{6}$};
\draw (219,50) node [anchor=north west][inner sep=0.75pt]   [align=left] {$\displaystyle a_{3}$};
\draw (424,50) node [anchor=north west][inner sep=0.75pt]   [align=left] {$\displaystyle a_{-3}$};
\draw (219,250) node [anchor=north west][inner sep=0.75pt]   [align=left] {$\displaystyle a_{9}$};
\draw (424,250) node [anchor=north west][inner sep=0.75pt]   [align=left] {$\displaystyle a_{-9}$};

\end{tikzpicture}}    
\caption{Representation of $X'(6+12)$ having a subaxet of $X'(2+4)$.}
    \label{2subaxet}
\end{figure}
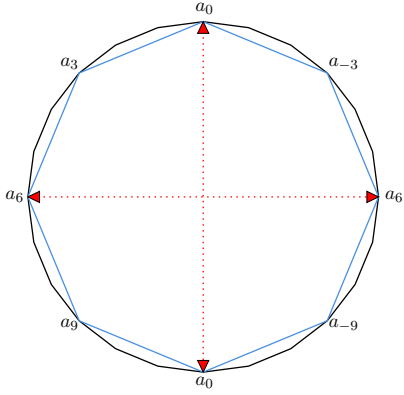
\begin{lem}\label{divider}
Let $t\in \N$ be odd. We have $s_{0,t}=s_{0,2q-t}$. 
\end{lem}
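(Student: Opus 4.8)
The plan is to exhibit the Miyamoto involution $\tu{q}$ as an element that simultaneously fixes $s_{0,t}$ and carries $s_{0,t}$ to $s_{0,2q-t}$; equating the two descriptions then yields the identity. First I would compute the image of $s_{0,t}$ under $\tu{q}$. Since $a_j^{\tu{q}}=a_{2q-j}$ we have $a_0^{\tu{q}}=a_{2q}$ and $a_t^{\tu{q}}=a_{2q-t}$, and the skew identity $a_{2i}=a_{2i+2q}$ with $i=0$ gives $a_{2q}=a_0$. As $\tu{q}$ is an algebra automorphism fixing the scalar $\bt$, this gives
\begin{align*}
s_{0,t}^{\tu{q}}=a_0^{\tu{q}}a_t^{\tu{q}}-\bt\left(a_0^{\tu{q}}+a_t^{\tu{q}}\right)=a_0a_{2q-t}-\bt(a_0+a_{2q-t})=s_{0,2q-t}.
\end{align*}
Hence it suffices to show that $\tu{q}$ fixes $s_{0,t}$, for then $s_{0,t}=s_{0,t}^{\tu{q}}=s_{0,2q-t}$.

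To establish that $\tu{q}$ fixes $s_{0,t}$, I would invoke Lemma \ref{grp s}, which gives that $s_{0,t}$ is fixed by $\GenG{\tu{0},\tu{t}}$; thus it is enough to prove $\tu{q}\in\GenG{\tu{0},\tu{t}}$. This dihedral group contains the translation $\tu{0}\tu{t}=\rho^t$, and therefore the reflection $\tu{0}\rho^{t\ell}=\tu{t\ell}$ about every index of the form $t\ell$ with $\ell\in\Z$ (both automorphisms agree on the generators $a_0,a_1$). It then remains to locate $a_q$ among the axes $a_{t\ell}$. Because $t$ is odd and $2q=2^{n+1}$, we have $\gcd(t,2q)=1$, so $t$ is invertible modulo $2q$ and there is an $\ell_0\in\Z$ with $t\ell_0\equiv q\Mod{2q}$. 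Writing $t\ell_0=q+2qs$, the right-hand side is even, so $a_{t\ell_0}$ is an even axis, and the skew identity collapses it to $a_{t\ell_0}=a_q$. Consequently $\tu{q}=\tu{t\ell_0}\in\GenG{\tu{0},\tu{t}}$, completing the argument.

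The crux of the proof is exactly this identification $a_{t\ell_0}=a_q$. At the purely combinatorial level of the regular polygon, $\tu{q}$ need not lie in $\GenG{\tu{0},\tu{t}}$, since the reflection indices $t\ell$ form the subgroup generated by $t$ rather than all of $\Z$; it is precisely the skew collapsing of the even vertices modulo $2q$ that forces the reflection axis $a_q$ into this dihedral group. Once that membership is secured, the remaining content is the short automorphism computation above, so I expect the coprimality step together with the skew identification to be the only genuinely delicate point.
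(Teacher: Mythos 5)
Your proof is correct, and it takes a genuinely different route from the paper's. The paper argues purely by index shifting: since $t$ is odd and $4q$ is a power of $2$, $t$ generates $\Z/4q\Z$, so one can pick $l$ with $lt\equiv 2q-t\Mod{4q}$; Lemma \ref{mod s} then gives $s_{0,t}=s_{lt,t}$, and a direct computation using $a_{lt}=a_{2q-t}$ and the skew identity $a_{lt+t}=a_{2q}=a_0$ yields $s_{lt,t}=s_{0,2q-t}$. You instead apply a single reflection: $\tu{q}$ carries $s_{0,t}$ to $s_{0,2q-t}$ (again via $a_{2q}=a_0$), and you show it fixes $s_{0,t}$ by locating $\tu{q}$ inside $\GenG{\tu{0},\tu{t}}$ using Lemma \ref{grp s} together with the invertibility of $t$ modulo $2q$. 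The ingredients are the same in both arguments --- Lemma \ref{grp s} (the paper uses it through its corollary Lemma \ref{mod s}), coprimality of the odd number $t$ with a power of $2$, and the skew identity --- but your version makes the symmetry conceptually visible ($s_{0,2q-t}$ is literally the image of $s_{0,t}$ under a reflection fixing it), at the cost of the explicit group-membership verification, whereas the paper's version is shorter because Lemma \ref{mod s} already packages the dihedral bookkeeping. Your intermediate steps ($\tu{0}\tu{t}=\rho^t$, $\tu{0}\rho^{t\ell}=\tu{t\ell}$, equality of automorphisms checked on the generating axes) are all sound.

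One correction to your closing commentary, though it does not affect the proof itself: the membership $\tu{q}\in\GenG{\tu{0},\tu{t}}$ does \emph{not} actually require the skew identity. In the regular $4q$-gon, the reflections at the antipodal vertices $a_i$ and $a_{i+2q}$ are the same element of $D_{4q}$ (both send $a_j$ to $a_{2i-j}$ with indices mod $4q$), so vertex reflections are already parametrized modulo $2q$, and $t\ell_0\equiv q\Mod{2q}$ suffices to put $\tu{q}$ in the dihedral group even without any collapsing. Where skewness is genuinely irreplaceable is in your first display: without $a_{2q}=a_0$ one would only get $s_{0,t}^{\tu{q}}=s_{2q-t,t}$ rather than $s_{0,2q-t}$, and the lemma would fail. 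So your proof uses the skew identity correctly in both places; only your diagnosis of which step is the crux is off.
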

\begin{proof}
As $t$ is odd and $4q$ is a power of $2$, $t$ is a generator of the group $(\Z/4q\Z,+)$. Therefore there exists $l\in \Z$ such that $2q-t\equiv lt\Mod{4q}$ thus $a_{2q-t}=a_{lt}$ and $a_{lt+t}=a_{2q}=a_0$. As $0\equiv lt \Mod{t}$, $s_{0,t}=s_{lt,t}$ by Lemma \ref{mod s}. We have
\[ s_{0,t}=s_{lt,t}=a_{lt}a_{lt+t}-\bt(a_{lt}+a_{lt+t})=a_{2q-t}a_{0}-\bt(a_{2q-t}+a_0)=s_{0,2q-t}.\]
\end{proof}

\begin{lem}\label{even lem}
For $t$ odd, we have $a_t+a_{-t}=a_{2q-t}+a_{-(2q-t)}$.
\proof By Lemma \ref{divider}, $s_{0,t}=s_{0,2q-t}$. Notice that for $r\in \Z$
\[\begin{split}
\lm_{a_0}(s_{0,r})&=\lm_{a_0}(a_0a_r-\bt(a_0+a_r))=\lm_{a_0}(a_0a_r)-\bt\lm_{a_0}(a_0)-\bt\lm_{a_0}(a_r)\\
&=\lm_r-\bt-\bt\lm_r=(1-\bt)\lm_r-\bt 
\end{split}\]
by the properties of the projection map.
We have 
\[(1-\bt)\lm_t-\bt=\lm_{a_0}(s_{0,t})=\lm_{a_0}(s_{0,2q-t})=(1-\bt)\lm_{2q-t}-\bt.\]
Thus $\lm_t=\lm_{2q-t}$. Using $u_i$ and $v_i$ in Lemma \ref{eigen}, we get
\[ \frac{1}{2}(\al-\bt)(a_t+a_{-t}-a_{2q-t}-a_{-(2q-t)})=\al u_t-\al u_{2q-t}\in A_0(a_0)\]
and
\[ \frac{1}{2}\bt(a_t+a_{-t}-a_{2q-t}-a_{-(2q-t)})=\al v_t-\al v_{2q-t}\in A_\al(a_0).\]
An eigenvector can have two different eigenvalues if and only if is equal to $0$. Hence $a_t+a_{-t}=a_{2q-t}+a_{-(2q-t)}$. \qed  
\end{lem}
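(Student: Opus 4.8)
The plan is to deduce the identity from the two eigenvector decompositions in Lemma~\ref{eigen}, once we know that the coefficients that should match across $t$ and $2q-t$ genuinely agree. Since $t$ is odd and $2q$ is even, $2q-t$ is again odd, so Lemma~\ref{divider} applies and gives $s_{0,t}=s_{0,2q-t}$; this is the one nontrivial structural input, and everything else is bookkeeping on top of it.

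First I would establish that $\lm_t=\lm_{2q-t}$. The cleanest route is to apply the projection map $\lm_{a_0}$ to both sides of $s_{0,t}=s_{0,2q-t}$. For an arbitrary $r$, expanding $s_{0,r}=a_0a_r-\bt(a_0+a_r)$ and using linearity of $\lm_{a_0}$ together with $\lm_{a_0}(a_0)=1$ and $\lm_{a_0}(a_0a_r)=\lm_r$ (the $1$-eigencomponent of $a_0a_r$ is $\lm_r a_0$) gives $\lm_{a_0}(s_{0,r})=(1-\bt)\lm_r-\bt$. Evaluating at $r=t$ and $r=2q-t$ and cancelling the common terms yields $\lm_t=\lm_{2q-t}$.

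Next I would feed this back into Lemma~\ref{eigen}. Because $\ep_i=(1-\al)\lm_i-\bt$ and $\gm_i=\bt-\lm_i$ depend only on $\lm_i$, the equalities $\lm_t=\lm_{2q-t}$ and $s_{0,t}=s_{0,2q-t}$ make both the $a_0$-terms and the $s_{0,\bullet}$-terms cancel in the differences $u_t-u_{2q-t}$ and $v_t-v_{2q-t}$. Writing $X:=a_t+a_{-t}-a_{2q-t}-a_{-(2q-t)}$, what survives is $\al(u_t-u_{2q-t})=\tfrac12(\al-\bt)X$ and $\al(v_t-v_{2q-t})=\tfrac12\bt X$; the left-hand sides lie in $A_0(a_0)$ and $A_\al(a_0)$ respectively.

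Finally, since the characteristic is not $2$, $\bt\neq 0$ and $\al\neq\bt$, both scalars $\tfrac12(\al-\bt)$ and $\tfrac12\bt$ are nonzero, so $X$ itself lies simultaneously in $A_0(a_0)$ and $A_\al(a_0)$. As $\al\neq 0$, a nonzero vector cannot be an eigenvector of $\text{ad}_{a_0}$ for two distinct eigenvalues, forcing $X=0$, which is exactly the claimed identity $a_t+a_{-t}=a_{2q-t}+a_{-(2q-t)}$. I expect the only real obstacle to be ensuring that \emph{both} matching coefficients ($\lm_\bullet$ and $s_{0,\bullet}$) collapse at the same time, so that the $u$- and $v$-differences reduce to pure scalar multiples of one vector $X$; the eigenvalue-clash finish is then immediate.
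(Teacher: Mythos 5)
Your proposal follows essentially the same route as the paper's own proof: apply Lemma~\ref{divider}, push $s_{0,t}=s_{0,2q-t}$ through the projection map to get $\lm_t=\lm_{2q-t}$, cancel the $a_0$- and $s_{0,\bullet}$-terms in the formulas of Lemma~\ref{eigen} so that $u_t-u_{2q-t}$ and $v_t-v_{2q-t}$ become scalar multiples of $X$, and conclude $X=0$ from the eigenvalue clash; indeed you are slightly more careful than the paper in verifying that the scalars $\tfrac12(\al-\bt)$ and $\tfrac12\bt$ are nonzero (though note these are nonzero because $\al\neq\bt$ and $\bt\notin\{0,1\}$ from the fusion-law hypotheses, not because of the characteristic, which only guarantees $\tfrac12$ exists). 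The argument is correct as written.
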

\begin{prop}\label{power of 2}
We have that $A$ cannot exist. 
\proof Applying $t=q-1$ to Lemma \ref{even lem}, we get
\[a_{q-1}+a_{-(q-1)}=a_{q+1}+a_{-(q+1)}.\]
As $q$ is even, we apply $\tu{\frac{q}{2}}$ to get
\[a_{q-(q-1)}+a_{q+(q-1)}=a_{q-(q+1)}+a_{q+(q+1)}\] which is equivalent to
\begin{equation}\label{even 1}
    a_{1}+a_{2q-1}=a_{-1}+a_{-(2q-1)},
\end{equation}
as $a_{2q+1}=a_{-(2q-1)}$
Applying $t=1$ to Lemma \ref{even lem}, we get
\begin{equation}\label{even 2}
    a_1+a_{-1}=a_{2q-1}+a_{-(2q-1)}.
\end{equation}
Taking the difference of \eqref{even 1} and \eqref{even 2}, we have
\[ 2a_{-1}=2a_{2q-1}.\]
Since the characteristic of $\F$ is not equal to $2$, we get $a_{-1}=a_{2q-1}$. Notice that this cannot happen as $-1$ is odd and this identity is never true. \qed
\end{prop}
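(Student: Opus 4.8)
The plan is to force a forbidden coincidence among the \emph{odd} axes by combining two instances of Lemma \ref{even lem}, exploiting the fact that $q=2^n$ is even to gain an extra symmetry. Throughout I would use only that $\tu{i}$ acts by $a_j\mapsto a_{2i-j}$, the ambient periodicity $a_j=a_{j+4q}$, the skew identity $a_{2i}=a_{2(q+i)}$, and $\mathrm{char}\,\F\neq 2$.

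First I would apply Lemma \ref{even lem} with the parameter $t=q-1$, which is odd precisely because $q$ is even; this yields $a_{q-1}+a_{-(q-1)}=a_{q+1}+a_{-(q+1)}$. The pivotal observation is that evenness of $q$ also makes $q/2$ an integer, so $\tu{q/2}$ is a genuine Miyamoto automorphism of the whole algebra, acting as $a_j\mapsto a_{q-j}$. Applying $\tu{q/2}$ to the displayed identity and reducing the resulting subscripts via the $4q$-periodicity (so that $a_{2q+1}$ collapses to $a_{-(2q-1)}$) should transform it into $a_1+a_{2q-1}=a_{-1}+a_{-(2q-1)}$, an identity among odd axes.

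Next I would invoke Lemma \ref{even lem} a second time, now with $t=1$, giving $a_1+a_{-1}=a_{2q-1}+a_{-(2q-1)}$. Subtracting the two identities cancels $a_1$ on one side and $a_{-(2q-1)}$ on the other, leaving $2a_{2q-1}=2a_{-1}$; since $\mathrm{char}\,\F\neq 2$ this forces $a_{2q-1}=a_{-1}$. But $2q-1$ and $-1$ are both odd, and the skew identity identifies only \emph{even} axes, so these two axes are genuinely distinct in $X'(q+2q)$, delivering the contradiction that proves $A$ cannot exist.

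The conceptual heart of the argument — and the only place where ingenuity rather than routine computation is required — is recognizing the right \emph{resonance}: that $q$ being a power of two is what simultaneously makes $q-1$ odd (so Lemma \ref{even lem} applies) and makes $\tu{q/2}$ available (so a single symmetry aligns the $t=q-1$ relation with the $t=1$ relation). Once the correct pairing is identified, the main obstacle reduces to careful index bookkeeping under $\tu{q/2}$ and the periodicity; I would verify in particular that $a_{2q+1}=a_{-(2q-1)}$ and that $a_{2q-1}\neq a_{-1}$ among the odd axes, so that the final equality is indeed impossible.
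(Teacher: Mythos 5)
Your proposal is correct and takes essentially the same approach as the paper's own proof: both apply Lemma \ref{even lem} at $t=q-1$ and $t=1$, use the Miyamoto automorphism $\tu{\frac{q}{2}}$ (available exactly because $q$ is even) together with the $4q$-periodicity identity $a_{2q+1}=a_{-(2q-1)}$ to align the two relations, and subtract to obtain $a_{-1}=a_{2q-1}$, which is impossible since the skew identity only identifies even axes. The only difference is expository: you flag explicitly the bookkeeping checks ($q-1$ odd, $a_{2q-1}\neq a_{-1}$ among odd axes) that the paper performs in passing.
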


\proof[Proof of Theorem $1.2$.] Let $V=\GenA{a_0,a_1}$ be an $\mon{\al,\bt}$-axial algebra with axet $X'(k+2k)$. We have $A=\GenA{a_0,a_m}$ is a subalgebra of $V$. Notice that $A$ is a $2$-generated $\mon{\al,\bt}$-axial algebra and has an axet of $X'(q+2q)$ by Proposition \ref{even reduction}. By Proposition \ref{power of 2}, $A$ cannot exist thus neither can $V$. \qed
\bibliographystyle{abbrv}
\bibliography{references}
\end{document}